\newcommand{\eoc}[1]{\\{\bf End of Case #1}}
\newcommand{\upp}[1]{\langle#1\rangle}
\def\deg{\text{deg}}
\def\cI{{\mathcal I}}
\def\cH{{\mathcal H}}
\def\e{\varepsilon}    \def\g{\gamma}
\def\G{\Gamma}  \def\k{\kappa}
 \def\th{\theta}    
  \def\n{\nu} \def\p{\pi}
\def\r{\rho}  \def\s{\sigma} 
\def\t{\tau} \def\om{\omega}
\def\cP{{\cal P}}
\def\Ic{I_{conn}}
\newtheorem{theorem}{Theorem}
\newtheorem{lemma}[theorem]{Lemma}
\newtheorem{Remark}{Remark}
\newcommand{\rdown}[1]{{\left\lfloor #1\right \rfloor}}
\newcommand{\brac}[1]{\left(#1\right)}
\newcommand{\bfrac}[2]{\left(\frac{#1}{#2}\right)}
\newcommand{\set}[1]{\left\{#1\right\}}
\def\E{\mathbb{E}}
\def\Pr{\mathbb{P}}
\newcommand{\ignore}[1]{}
\def\cA{{\mathcal A}}
\def\cG{{\mathcal G}}
\def\cH{{\mathcal H}}
\def\cI{{\mathcal I}}
\def\cK{{\mathcal K}}
\def\cM{{\mathcal M}}
\def\cP{{\mathcal P}}
\newcommand{\beq}[2]{\begin{equation}\label{#1}#2\end{equation}}
\newcommand{\mults}[1]{\begin{multline*}#1\end{multline*}}
\newcommand{\mult}[2]{\begin{multline}\label{#1}#2\end{multline}}
\def\cG{\mathcal{G}}
\def\cI{{\cal I}}
\def\bof{{\bf f}}
\def\cA{{\mathcal A}}
\def\be{{\bf e}}
\def\MT{MultiTree}
\def\MF{MultiForest}
\def\MP{MultiPerfectMatching}
\begin{document}
\author{Alan Frieze\thanks{Research supported in part by NSF grant DMS} and Wesley Pegden\thanks{Research supported in part by NSF grant DMS}\\Department of Mathematical Sciences\\Carnegie Mellon University\\Pittsburgh PA 15213}

\title{Multitrees in random graphs}
\maketitle
\begin{abstract}
Let $N=\binom{n}{2}$ and $s\geq 2$. Let $e_{i,j},\,i=1,2,\ldots,N,\,j=1,2,\ldots,s$ be $s$ independent permutations of the edges $E(K_n)$ of the complete graph $K_n$. A {\em MultiTree} is a set $I\subseteq [N]$ such that the edge sets $E_{I,j}$  induce spanning trees for $j=1,2,\ldots,s$. In this paper we study the following question: what is the smallest $m=m(n)$ such that w.h.p. $[m]$ contains a \MT. We prove a hitting time result for $s=2$ and an $O(n\log n)$ bound for $s\geq 3$.
\end{abstract}
\section{Introduction}
Let $N=\binom{n}{2}$ and $s\geq 2$. Let $e_{i,j},\,i=1,2,\ldots,N,\,j=1,2,\ldots,s$ be $s$ independent permutations of the edges $E(K_n)$ of the complete graph $K_n$. Let $\be_i=(e_{i,1},e_{i,2},\ldots,e_{i,s})$ and for $I\subseteq [N]$ let $E_{I,j}=\set{e_{i,j}:i\in I}$ for $j=1,2,\ldots,s$. A {\em MultiForest} is a set $I\subseteq [N]$ such that the edge sets $E_{I,j}$  induce forests for $j=1,2,\ldots,s$. A {\em MultiTree} is a \MF\ in which each forest is a spanning tree. In this paper we study the following question: what is the smallest $m=m(n)$ such that w.h.p. $[m]$ contains a \MT.

This is a particular case of the following more general question: given matroids $\cM_1,\cM_2,\ldots,\cM_s$ over a common ground set $E=\set{e_1,e_2,\ldots,e_M}$ let 
\[
\cI_k=\set{I\in \binom{[M]}{k}:\;\set{e_i,i\in I}\text{ is independent in }\cM_i,i=1,2,\ldots,s}.
\]
Then let $m^*=\max\set{k: \cI_k\neq \emptyset}$. Then we can ask what is the smallest $m=m(n)$ such that w.h.p. $[m]$ contains a member of $\cI_{m^*}$. In general this is a rather challenging question, mainly because the structure of randomly chosen matroids is not as well understood as the structure of random graphs.

There is at least one instance where we already have a precise answer to the above matroid question. We let $M=N$ and let $\cM_1$ be the graphic matroid of $K_n$. For $\cM_2$ we randomly color each edge $e\in E(K_n)$ uniformly with $c(e)\in C,|C|\geq n-1$ and $\cM_2$ is the partition matroid where a set $I\subseteq E(K_n)$ is independent if $e_1,e_2\in I$ implies that $c(e_1)\neq c(e_2)$. In more familiar terminology, $I$ is {\em rainbow colored}. This problem was solved in Frieze and McKay \cite{FM} where it was shown that w.h.p. $m^*$ is the smallest integer $m$ such that the graph induced by $e_1,e_2,\ldots,e_m$ is (i) connected and (ii) $|\set{c(e_i):i=1,2,\ldots,m}|\geq n-1$.

Going back to \MT's, we prove two theorems.
\begin{theorem}\label{th1}
 We have w.h.p. that $m^*=O(n\log n)$.
\end{theorem}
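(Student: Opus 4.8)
The plan is to fix $m=Cn\log n$ for a sufficiently large absolute constant $C$ and to show that $[m]$ contains a \MT\ w.h.p., by a second moment computation. It is convenient first to replace the permutation model by the equivalent one in which, unconditionally, each edge set $E_{[m],j}$ is a uniformly random $m$-element subset $F_j$ of $E(K_n)$ and, given $F_j$, the map $i\mapsto e_{i,j}$ on $[m]$ is a uniformly random bijection onto $F_j$, these data being mutually independent over $j=1,\dots,s$. The crucial consequence is that for any \emph{fixed} $I\subseteq[m]$ the edge sets $E_{I,1},\dots,E_{I,s}$ are mutually independent, each uniform on the $|I|$-subsets of $E(K_n)$. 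A \MT\ has exactly $n-1$ elements, so let $X$ be the number of $I\in\binom{[m]}{n-1}$ for which $E_{I,j}$ is a spanning tree for every $j$; it suffices to prove $\Pr(X=0)=o(1)$, which is exactly the assertion of Theorem~\ref{th1}.

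Write $q=n^{n-2}\big/\binom{N}{n-1}$ for the probability that a uniformly random $(n-1)$-subset of $E(K_n)$ is a spanning tree (Cayley's formula). By independence across coordinates, $\E X=\binom{m}{n-1}q^{s}$, and for $m=Cn\log n$ one has $\E X\to\infty$ (indeed faster than $e^{cn}$ for every fixed $c$). For the variance I would group ordered pairs $(I,I')$ by $\ell:=|I\cap I'|$; coordinate independence and symmetry give $\E[X^2]=\sum_{\ell=0}^{n-1}\binom{m}{n-1}\binom{n-1}{\ell}\binom{m-n+1}{n-1-\ell}\,q_\ell^{s}$, where $q_\ell$ is the probability that two fixed $(n-1)$-subsets of $[m]$ meeting in $\ell$ indices \emph{both} induce spanning trees in a single coordinate. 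Hence
\[
\frac{\E[X^2]}{(\E X)^2}=\sum_{\ell=0}^{n-1}f(\ell)\left(\frac{q_\ell}{q^2}\right)^{s},\qquad f(\ell):=\frac{\binom{n-1}{\ell}\binom{m-n+1}{n-1-\ell}}{\binom{m}{n-1}}.
\]
Since $X>0$ forces every $F_j$ to be connected, this ratio can be $1+o(1)$ only when $m$ exceeds the connectivity threshold $\tfrac12 n\log n$, which $m=Cn\log n$ comfortably does; the diagonal term $\ell=n-1$ contributes exactly $1/\E X=o(1)$, and $f$ is the hypergeometric weight, concentrated around its mean $\ell^\star=(n-1)^2/m\asymp n/\log n$ rather than at $\ell=0$.

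The main task --- and the step I expect to be genuinely delicate --- is to show $\sum_\ell f(\ell)(q_\ell/q^2)^s=1+o(1)$. This amounts to establishing $q_\ell=(1+o(1))q^2$ uniformly over the band of $\ell$ carrying almost all the mass of $f$ (roughly $\ell\lesssim n/\log n$), together with crude bounds making $f(\ell)(q_\ell/q^2)^s$ negligible for larger $\ell$. To estimate $q_\ell$ one conditions on the first index set inducing a spanning tree $T$; the $\ell$ shared edges then form a uniformly random $\ell$-edge sub-forest $S$ of $T$, and $q_\ell/q$ is the probability that $S$ together with $n-1-\ell$ uniformly random edges of $E(K_n)\setminus E(T)$ forms a spanning tree. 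This should be controlled via the generalized Cayley formula --- the number of spanning trees of $K_n$ containing a fixed spanning forest with component sizes $a_1,\dots,a_k$ is $n^{k-2}\prod_i a_i$ --- applied to the typical component profile of a random $\ell$-subset of the edges of a random tree, with an inclusion--exclusion (or Poisson-type) correction for the constraint that the new edges avoid $E(T)\setminus S$; I would expect this to yield $q_\ell=(1+o(1))q^2$ on the bulk once $C$ is large, and at most polynomial growth of $q_\ell/q^2$ beyond, which against the super-exponential hypergeometric tail of $f$ is negligible. If the plain second moment fell just short, the Robinson--Wormald small-subgraph conditioning method applied to short-cycle counts in the $F_j$ would be the fallback. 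Granting $\E[X^2]=(1+o(1))(\E X)^2$, Chebyshev gives $\Pr(X=0)\le\Var X/(\E X)^2=o(1)$, so $[m]$ contains a \MT\ w.h.p.\ and thus $m^*=O(n\log n)$ w.h.p. The one quantity I would watch is $q_\ell/q^2$, which runs from about $e^{-2}$ near $\ell=0$ up to $q^{-1}$ at $\ell=n-1$, so the whole argument hinges on its passing through $1+o(1)$ exactly in the window where $f$ lives.
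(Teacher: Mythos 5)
Your setup is correct as far as it goes: for fixed $I$ the sets $E_{I,1},\dots,E_{I,s}$ are indeed independent uniform $(n-1)$-subsets, $\E X=\binom{m}{n-1}q^s\to\infty$, and the hypergeometric decomposition $\E X^2/(\E X)^2=\sum_\ell f(\ell)(q_\ell/q^2)^s$ is right. But the step you yourself flag as delicate is where the argument actually fails, and no choice of the constant $C$ repairs it: it is not true that this sum is $1+o(1)$ when $m=Cn\log n$. By Jensen (convexity of $x\mapsto x^s$ against the probability weights $f(\ell)$), the ratio is at least $\bigl(\sum_\ell f(\ell)\,q_\ell/q^2\bigr)^s$, and the inner sum is exactly the $s=1$ case, i.e. $\E[\tau^2]/(\E\tau)^2$ for $\tau=\tau(G_{n,m})$, the number of spanning trees of a single coordinate graph. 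At the connectivity scale $m=\Theta(n\log n)$ this ratio diverges: $\log\tau$ is governed by $\sum_v\log d_v$, whose variance is of order $n/\log n\to\infty$, so $\tau$ is typically smaller than $\E\tau$ by a factor $e^{\Theta(n/\log n)}$ and spread over a range of similar exponential width (this is the familiar non-concentration of spanning-tree/perfect-matching/Hamilton-cycle counts at this density, cf.\ Janson's lognormal limit theorems, and it is exactly why Johansson--Kahn--Vu/FKNP-type machinery exists). In your notation, when you carry out the $q_\ell$ computation via the generalized Cayley formula you pick up competing corrections of size $e^{\Theta(\ell^2/n)}$ (from the falling-factorial bookkeeping and from clumping of the shared edges inside the first tree), and they do not cancel; in the hypergeometric bulk $\ell\asymp n/(C\log n)$ these are $e^{\Theta(n/\log^2 n)}$, so $q_\ell/q^2$ is not $1+o(1)$ there. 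Enlarging $C$ only moves the bulk, it never kills the divergence, so Chebyshev cannot close the argument. The fallback you mention, small-subgraph conditioning on short cycle counts, does not rescue it either: in this regime the divergent log-variance comes from degree fluctuations of the coordinate graphs (degrees $2C\log n\pm\Theta(\sqrt{\log n})$), not from short cycles, which is precisely the situation Robinson--Wormald conditioning is not designed for.

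This is also where your route departs essentially from the paper, which avoids counting \MT's altogether. There, one first greedily grows a \MF\ until each coordinate forest contains a giant tree on $\approx n/2$ vertices (Phase 1), and then completes it by applying the Frankston--Kahn--Narayanan--Park theorem (Theorem \ref{T2020}) to the hypergraph of ``\MT-inducing matchings,'' for which only the spreadness estimate of Lemma \ref{spread} is needed --- a first-moment-style count that is insensitive to the concentration issues above. If you wanted to push a counting proof through, you would have to work conditionally on (at least) the degree sequences of all $s$ coordinate graphs, a much heavier undertaking than either your sketch or the paper's argument.
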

When $s=2$ we can use Edmond's theorem \cite{Ed} to prove the following: let  $\G_{j,m}=([n],E_{[m],j})$. 
\begin{theorem}\label{th2}
W.h.p. $m^*=\max\set{m_1,m_2}$ where for $j=1,2$, $m_j,=\min\set{m:\G_{j,m}\text{ is connected}}$.  
\end{theorem}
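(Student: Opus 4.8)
The plan is as follows. The bound $m^*\ge\max\{m_1,m_2\}$ is immediate and deterministic: if $[m]$ contains a \MT\ $I$ then $E_{[m],j}\supseteq E_{I,j}$ is a spanning tree for $j=1,2$, so $m\ge m_j$. For the matching upper bound, set $m_0=\max\{m_1,m_2\}$ and regard $\cM_1,\cM_2$ as the graphic matroids on the common ground set $[m_0]$ pulled back through $i\mapsto e_{i,j}$; their rank functions are $r_j(A)=n-\k_j(A)$ where $\k_j(A)$ is the number of components of $([n],E_{A,j})$, and $\G_{1,m_0},\G_{2,m_0}$ are both connected by choice of $m_0$. By Edmonds' matroid intersection theorem \cite{Ed}, $[m_0]$ contains a \MT\ iff $r_1(A)+r_2([m_0]\setminus A)\ge n-1$ for all $A\subseteq[m_0]$, so I would reduce Theorem~\ref{th2} to showing that this condition holds w.h.p.

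Next I would carry out a structural reduction. Assuming the condition fails, pick a violating $A$ with $\bar A:=[m_0]\setminus A$ of minimum size, and write $\G_j=\G_{j,m_0}$. A short exchange argument yields three facts. First, $A$ is closed in $\cM_1$: otherwise some $e_{i,1}$, $i\in\bar A$, lies in the span of $E_{A,1}$ and moving $i$ into $A$ gives a smaller violating complement; so, with $V_1,\dots,V_p$ the components of $([n],E_{A,1})$ and $F_1$ the set of $\G_1$-edges joining distinct $V_i$'s, we get $F_1=E_{\bar A,1}$ and $|F_1|\ge p-1$ (as $\G_1$ is connected). Second, for $i\in\bar A$ the edge $e_{i,1}$ joins two $V_i$'s, so moving $i$ into $A$ lowers $\k_1$ by exactly $1$; were $e_{i,2}$ a bridge of $([n],E_{\bar A,2})$ this move would again give a smaller violating complement, so $F_2:=E_{\bar A,2}=\{e_{i,2}:e_{i,1}\in F_1\}$ spans a \emph{bridgeless} subgraph of $\G_2$. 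Third, the violation says $\mathrm{rank}_{\G_2}(F_2)=r_2(\bar A)\le n-2-r_1(A)=p-2$, while subadditivity of $r_1$ gives $\mathrm{rank}_{\G_1}(F_1)\ge\mathrm{rank}(\G_1)-\mathrm{rank}(\G_1-F_1)=p-1$. A bridgeless graph with at least $p-1$ edges and rank at most $p-2$ forces $p\ge4$, and for $p=4$ it must be a single triangle; so the $p=4$ case reduces to: three bridges of $\G_1$ whose removal splits $\G_1$ into four components, whose $e_{\cdot,2}$-partners form a triangle in $\G_2$.

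I would then feed in the near-threshold structure of $\G_1,\G_2$. Standard first-moment and hitting-time arguments give that w.h.p.\ each of $\G_1,\G_2$ is connected and has: at most $\mathrm{polylog}(n)$ vertices of degree below a slowly growing threshold; every bridge a pendant edge at a degree-$1$ vertex, with pendant edges pairwise vertex-disjoint; no two adjacent vertices both of degree $\le2$, no pendant path of length $\ge2$, no ``pendant triangle''; and strong edge-expansion for sets not dominated by low-degree vertices. With this, a $p=4$ bad partition needs three vertex-disjoint pendant edges of $\G_1$ whose $e_{\cdot,2}$-images form a triangle in $\G_2$; there are $O(\mathrm{polylog}(n))$ such triples, the $e_{\cdot,2}$-images of three fixed indices are (essentially) a uniformly random ordered triple of distinct edges of $K_n$, so span a triangle with probability $O(n^{-3})$, and a union bound over the $n^{1+o(1)}$ relevant values of $m_0$ is $o(1)$.

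For $p\ge5$ the same philosophy applies, with more bookkeeping, and this is where I expect the real work. The bridgeless low-rank shape of $F_2$ plus the requirement that $\G_1-F_1$ have at least $\mathrm{rank}_{\G_2}(F_2)+2$ components should force every part of $\cP$ to be either one ``giant'' part or a small low-degree block — any other part would, by expansion, push so many edges into $F_1$ that $F_2$ could not be bridgeless of rank $\le p-2$, while the ``bare paths'' one would need to cut simply do not occur — so $F_1$ is confined to the $O(\mathrm{polylog}(n))$ edges meeting low-degree vertices of $\G_1$, and $\ell:=|F_1|=O(\mathrm{polylog}(n))$. Then I would union over the $2^{O(\mathrm{polylog}(n))}$ candidate sets $F_1$: each forces the uniformly random $\ell$-set $F_2$ (with $\ell\ge3$) to be bridgeless, hence supported on $\le\tfrac32(p-2)<\tfrac32\ell$ vertices, which has probability at most $(\tfrac32\ell)^{2\ell}n^{-\ell/2+o(\ell)}\le n^{-3/2+o(1)}$; summing over $\ell$, over $F_1$, and over $m_0$ still gives $o(1)$. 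The hard part will be making this confinement of $F_1$ precise and uniform over the relevant edge-counts — that is, turning ``the $2$-edge-connected core of $\G_{j,m}$ has no sparse cuts'' into a bound showing that a bad partition can only cut off low-degree debris.
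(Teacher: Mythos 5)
Your reduction is sound and in fact follows the paper's route: the lower bound is trivial, and the upper bound is exactly the Edmonds condition the paper verifies (your $r_1(A)+r_2([m_0]\setminus A)\geq n-1$ is the paper's $\k_1(A)+\k_2(X_m\setminus A)\leq n+1$). Your minimal-counterexample step (crossing set $F_1$, bridgeless partner set $F_2$ of rank $\leq p-2$, hence $p\geq 4$ and the triangle case at $p=4$) is a clean repackaging of the paper's bridge-removal remark and of its Case~3/Lemma~\ref{g} analysis of low-degree vertices, and your union bound over the $n^{1+o(1)}$ values of $m_0$ is a legitimate substitute for the paper's device of proving every structural lemma ``strongly'' on $I_{conn}$.

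The genuine gap is the step you yourself flag: the ``confinement'' of $F_1$ to the $O(\mathrm{polylog}\, n)$ edges at low-degree vertices, i.e.\ ruling out violating sets whose crossing set has intermediate size. This cannot be obtained from expansion of $\G_1$ alone, as your sketch suggests. The only deterministic constraint the shape of $F_2$ imposes is $\ell=|F_1|\leq\binom{3(p-2)/2}{2}=O(p^2)$, and once $p\gtrsim\log n$ a budget of $O(p^2)$ crossing edges is enough to cut $p-1$ vertices of \emph{typical} degree $\approx\log n$ off a connected $\G_{1,m}$; so nothing about $\G_1$ confines $F_1$ to low-degree debris. What kills these configurations is that the $\ell$ partner edges $F_2$ would have to concentrate on $O(p)$ vertices, and this must be beaten against the huge number ($\approx n^{p}$ and worse) of candidate partitions, uniformly over all scales of $\ell$ from $\mathrm{polylog}(n)$ up to $\Theta(n\log n)$, and simultaneously over the random window containing $m_0$. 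That multi-regime estimate is precisely the bulk of the paper's proof (Cases 1 and 2, resting on the density and cut lemmas, Lemmas \ref{a}, \ref{aaa}, \ref{d}, \ref{e}, all proved to hold strongly on $I_{conn}$), and your proposal replaces it with the phrase ``the same philosophy applies, with more bookkeeping.'' Your endgame union bound ($2^{O(\mathrm{polylog} n)}$ candidate $F_1$'s times $n^{-\ell/2+o(\ell)}$) is fine \emph{after} confinement, but as it stands the intermediate regime — the hard part of the theorem — is missing.
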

There is no actual need to restrict attention to matroid intersection. For example let $I$ be a {\em Multimatching} if the sets $E_{I,j},j=1,2,\ldots,s$ induce matchings and let $I$ be a {\em MultiPerfectMatching} if $|I|=\rdown{n/2}$ i.e. if the associated matchings are (near) perfect. 
\begin{theorem}\label{th3}
W.h.p. $[m]$ contains a MultiPerfectMatching if $m\geq Kn\log n$ for some absolute constant $K$.
\end{theorem}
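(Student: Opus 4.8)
The plan is to proceed in two phases: first build greedily a MultiMatching that is near-perfect in every layer, then complete it by a coordinated augmentation. It helps to first reveal the graphs $\G_j=([n],E_{[m],j})$ together with their edge-orderings; since $m=Kn\log n$ lies far above the connectivity threshold, w.h.p. every $\G_j$ has minimum degree $\Omega(\log n)$, is a good vertex-expander, and has $\exp(\Omega(n))$ perfect matchings, and the only randomness then left is the uniform identification of the common index set $[m]$ with each $E(\G_j)$, independently over $j$. For a \emph{fixed} $I\subseteq[m]$ with $|I|=\lfloor n/2\rfloor$, the set $E_{I,j}$ is a uniform $\lfloor n/2\rfloor$-subset of $E(K_n)$, independently over $j$, so $\E[\#\text{MultiPerfectMatchings in }[m]]=\binom{m}{\lfloor n/2\rfloor}\big(\#\{\text{near-perfect matchings of }K_n\}/\binom{N}{\lfloor n/2\rfloor}\big)^s$, which grows exponentially in $n$ for any fixed $K>0$ and $s$. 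Thus the theorem is purely a concentration statement: one must rule out that this expectation is large while the count is typically $0$.

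\emph{Phase 1.} Split $[m]=A\cup B$ with $|A|=(1-\eta)m$ for a small constant $\eta$. Process the columns of $A$ in order, maintaining $I\subseteq A$ and adding column $i$ exactly when this keeps every $E_{I,j}$ a matching. If $|I|=t$ then, since the $s$ coordinates of a column are independent and each is nearly uniform among the unseen edges, the next column is addable with probability $(1+o(1))(1-2t/n)^{2s}$, so the expected wait from size $t$ to size $t+1$ is $(1+o(1))(1-2t/n)^{-2s}$. Summing this and applying standard martingale concentration to the greedy trajectory shows that w.h.p., once $A$ is processed, $I=:I_1$ is a MultiMatching of per-layer deficiency $\nu:=n-2|I_1|=\Theta\big(n(\log n)^{-1/(2s-1)}\big)=o(n)$.

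\emph{Phase 2.} It remains to augment $I_1$ by $\nu/2$ columns using the fresh block $B$. When the per-layer deficiency is $d$, a column all of whose edges lie inside the current uncovered sets $U_1,\dots,U_s$ has density only $(1+o(1))(d/n)^{2s}$; hence once $d$ is polynomially small no such column exists, and the augmentation cannot proceed by bare insertions — it must rearrange columns already placed. The device is a local replacement: to raise $|I|$ by one, find $R\subseteq I$ and $S\subseteq B\setminus I$ with $|S|=|R|+1$ such that, for every $j$, deleting the $R$-edges from $E_{I,j}$ and inserting the $S$-edges leaves a matching; the expansion of the $\G_j$ and the surplus of fresh columns in $B$ make such replacements plausible, and the natural way to organise them is an absorption-type reservation inside $B$ (a substructure that w.h.p. can repair \emph{any} near-perfect MultiMatching handed to it), or alternatively a Tutte/expansion argument showing directly that a maximum MultiMatching contained in $[m]$ is w.h.p. near-perfect. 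I expect Phase 2 to be the main obstacle: unlike a single-layer matching argument, any alternating or augmenting sequence of columns is constrained \emph{simultaneously} in all $s$ layers, and once such a sequence follows a matching edge at a vertex $x_j$ of layer $j$ it is forced to delete the unique column covering $x_j$ there — a column that differs from layer to layer — so a naive ``augmenting path of columns'' essentially never extends past one step. Designing a replacement or absorber structure that tolerates this inter-layer coupling is where the real work lies; the remaining points, such as the case of odd $n$, are routine.
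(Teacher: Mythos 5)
Your write-up is a plan, not a proof: everything hinges on Phase~2, and you concede yourself that you do not know how to carry it out. After your greedy Phase~1 the per-layer deficiency is $d=\Theta\bigl(n(\log n)^{-1/(2s-1)}\bigr)=n^{1-o(1)}$, and closing such a deficiency in all $s$ layers simultaneously is at least as hard as Shamir's problem (already for a single $2s$-uniform layer this is exactly the problem whose solution required Johansson--Kahn--Vu); gesturing at ``an absorption-type reservation'' or ``a Tutte/expansion argument'' without constructing the absorber, and while explicitly noting that augmenting sequences die after one step because of the inter-layer coupling, leaves the entire difficulty of the theorem unresolved. The first-moment computation in your preamble does not help with this, since (as you note) the issue is precisely that a large expectation does not rule out the count being typically zero.

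The missing idea, which is how the paper proceeds, is a reduction that hands the whole completion problem to known hypergraph-matching results rather than redoing them. Fix a random balanced bipartition $[n]=A\cup B$ (say $n$ even, $|A|=|B|=n/2$; odd $n$ is a routine modification) and keep only those columns $\be_i$ in which \emph{every} coordinate $e_{i,j}$ has one end in $A$ and one in $B$; a $\approx 2^{-s}$ fraction of columns survive, so this only inflates $K$ by about $2^s$. A kept column corresponds to a hyperedge $(a_1,b_1,\ldots,a_s,b_s)$ of the complete $2s$-partite, $2s$-uniform hypergraph whose parts are $s$ disjoint copies of $A$ and of $B$, and a MultiPerfectMatching of the kept columns is \emph{exactly} a perfect matching of this hypergraph: the simultaneous one-edge-per-layer constraint that blocks your augmenting paths is encoded once and for all in the requirement that a hyperedge use one vertex from each part. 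Now cite the threshold results for perfect matchings in random multipartite $2s$-uniform hypergraphs (Johansson--Kahn--Vu, Kahn, or Bal--Frieze; alternatively the spread machinery of Frankston--Kahn--Narayanan--Park already used for Theorem~\ref{th1}) to conclude that $Kn\log n$ random hyperedges suffice w.h.p. With that reduction in hand, neither your Phase~1 nor any bespoke absorber is needed.
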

One thing missing from this paper is what might be called MultiHamiltonCycle, where the edge sets $E_{I,j},j=1,2,\ldots,s$ induce Hamilton cycles. We have no results on this at present, but we conjecture that $m\geq Kn\log n$ should be enough for the existence of such a structure, w.h.p.
\section{Proof of Theorem \ref{th1}}
{\bf Phase 1:} In this phase we greedily add $s$-tples until we have a \MF\ of linear size. Consider the following construction: let $I_0=\emptyset$ and $k_0=0$. After $t$ steps we will have a \MF\ $I_t=\set{k_1=1,k_2,\ldots,k_t}$. Given $I_t$ we say that an $s$-tple $\be_k$ is {\em addable} to $I_t$ if $I_t\cup \set{k}$ is a \MF. Let $k_{t+1}=\min\{k>k_t:\be_k$ is addable to $I_t\}$. We let $F_{t,j}$ denote the forest induced by $\set{e_{k_i,j}:i=1,\ldots,t}$. We stop this greedy process after we have constructed $I_{m_0}$ where $m_0$ is defined below. 

To analyse this process, we need to understand the component structure of the forests $F_{t,j}$. Consider the ordinary graph process $\G_i,i=1,2,\ldots,N$. For $r\geq 1$, let $a_\ell=\min\set{r:\G_{r}\text{ has $n-\ell$ components}}$. The distribution of component sizes in $F_{\ell, j}$ will be the same as the distribution of component sizes in $\G_{a_\ell}$. This follows by induction on $\ell$. In all cases, we merge two components with probability proportional to the product of their sizes.

Recall next that if $c>1$ is a constant then w.h.p. the random graph $\G_{cn/2}$  has $\approx \k(c)n$ components and a unique giant component of size $\approx ng(c)$ where $\k(c),g(c)$ are known functions of $c$. For a proof of this, see for example Frieze and Karo\'nski \cite{FK}, Chapter 2.

Suppose now that we let $c_0=g^{-1}(1/2)$. Thus w.h.p. $G_{n,c_0n/2}$ contains a unique giant component of size $\approx n/2$. With regard to our greedy process, after examining some number of $s$-tples we will w.h.p. have constructed a multi-forest $I_{m_0}$ on $m_0\approx n(1-\k(c_0))$ $s$-tuples, where each individual forest $F_{m_0,j}$ (i) contains a giant tree of size $\approx n/2$ and (ii) has $n_0=n-m_0-2$ small components. The vertices of forest $F_{m_0,j}$ not in $T_j$ form a collection $S_j$ of small trees $T_{1,j},T_{2,j}\ldots,T_{n_0,j}$, each of size $O(\log n)$. 

We next consider as to how long we have to run this part of the process altogether. We first consider the time taken to get giant trees of size $\approx n/2$. We know that w.h.p. up until we have added $c_0n$ $s$-tples the probability that an $s$-tple \bof\ can be added to our forest is at least  $\g\approx \bfrac{3}{4}^s$. This is because (i) the larger the forest, the less likely a random edge can be added without creating a cycle and (ii) unless a random edge has both vertices in the giant, it is unlikely to create a cycle. This follows from the fact that w.h.p. each non-giant component is of size $O(\log n)$, in which case the the probability of choosing an edge with both vertices in the same small component is $O(n\times \bfrac{\log n}{n}^2)$. Thus w.h.p. it requires at most $\frac{2c_0n}{\g}$ iterations to produce a giant tree of size $n/2$.

{\bf Phase 2:} We now discuss how we can complete $I_{m_0}$ to a multi-tree. An $s$-tuple $\be_{u}$ will be {\em acceptable} if for each $j\in[s]$, the edge $e_{u,j}$ has one vertex $x_{u,j}\in A_j=[n]\setminus V(T_j)$ and the other $y_{u,j}\in T_j$. An acceptable $s$-tple defines an edge in a random $s$-uniform multi-partite hypergraph $H$ with edges in $A_1\times A_2\times \cdots\times A_s$. The vertices of $H$ are $A_1\sqcup A_2\sqcup\cdots\sqcup A_s$. 

We continue the process of adding acceptable $s$-tples until $H$ contains a set of edges $(x_{t,1},x_{t,2}\ldots,x_{t,s}),t\in K$ for some set $K$ of size $n_0$ that satisfies the following property: if $X_j=\set{x_{t,j}:t\in K}$ then $|X_j\cap V(T_{l,j})|=1$ for all $1\leq i\leq n_0,1\leq j\leq s$. This ensures that for each $j$ and each non-giant tree $T$ of $F_{m_0,j}$ that exactly one of the $n_0$ edges added to the $j$th forest joins $T$ and the giant $T_j$, thus creating a \MT. We call such matchings {\em \MT\ inducing}.

We next consider the number of random $s$-tples we need to generate before we have a \MT inducing matching in $H$ w.h.p. Suppose now that $T_{i,j}$ has $t_{i,j}$ vertices for $i=1,2,\ldots,n_0,j=1,2,\ldots,s$. We consider the hypergraph $\cH$ with vertex set $X$ equal to the {\em edges} of the complete $s$-partite hypergraph $\cA_s$ on $A_1\times A_2\times \cdots\times A_s$. An edge of $\cH$ corresponds to a \MT\ inducing matching of $\cA_s$. We will argue that w.h.p. $O(n\log n)$ randomly chosen {\em vertices} of $\cH$ contain an edge of $\cH$. To do this we will use a recent breakthrough result of Frankston, Kahn, Narayanan and Park \cite{FKNP2020}. For this we need a definition. For a set $S\subseteq X=V(\cH)$ we let $\upp{S}=\set{T:\;S\subseteq T\subseteq X}$ denote the subsets of $X$ that contain $S$. We say that $\cH$ is $\k$-spread if
\[
|\cH\cap \upp{S}|\leq \frac{|\cH|}{\k^{|S|}},\quad\forall S\subseteq X.
\]
The following theorem is from  \cite{FKNP2020}
\begin{theorem}\label{T2020}
Let $\cK$ be an $r$-uniform, $\k$-spread hypergraph and let $X=V(\cK)$. There is an absolute constant $C>0$ such that if 
\beq{mbound}{
m\geq\frac{(C\log r)|X|}{\k}
}
then w.h.p. $X_m$ contains an edge of $\cK$. Here w.h.p. assumes that $r\to\infty$.
\end{theorem}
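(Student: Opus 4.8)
The plan is to follow the argument of Frankston, Kahn, Narayanan and Park \cite{FKNP2020}, which I sketch here. Write $N=|X|$ and $p=m/N=\frac{C\log r}{\k}$; we may assume $p\le 1$, since otherwise $\k\le C\log r$, $X_m=X$, and the claim is trivial. First I would pass from the uniform random $m$-set $X_m$ to the binomial set $X_p$ (each element of $X$ kept independently with probability $p$): since $|X_p|$ concentrates at $m$ and ``contains an edge of $\cK$'' is monotone, it suffices to find an edge of $\cK$ inside $X_p$ w.h.p. Next I would write $X_p$ as a union $W=W_1\cup\cdots\cup W_L$ of $L:=\rdup{\log_2 r}+1$ independent sparse layers, each $W_i$ a copy of $X_q$ with $q=\Theta(1/\k)$ chosen so that $1-(1-q)^L\le p$; then $W$ is stochastically dominated by $X_p$, so it is enough to show that $W$ contains an edge of $\cK$ w.h.p.

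The core is an iterative \emph{fragment-halving} that reveals the layers one at a time, maintaining an edge $Z_i\in\cK$ whose \emph{uncovered part} $Z_i\sm(W_1\cup\cdots\cup W_i)$ has size at most $r/2^{i}$. One starts from an arbitrary $Z_0\in\cK$ (for which the invariant holds trivially at $i=0$); if it survives to $i=L$ then $|Z_L\sm W|<1$, i.e.\ $Z_L\seq W$, and we are done. So everything reduces to the inductive step $i\mapsto i+1$: given $W_1,\dots,W_i$ and $Z_i$, whose uncovered part $S$ has some size $t\le r/2^{i}$, reveal a fresh layer $U=W_{i+1}$ and show that, with high probability over $U$, some $Z_{i+1}\in\cK$ has uncovered part (relative to $W_1\cup\cdots\cup W_{i+1}$) of size at most $t/2$.

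To carry out the step I would bound its failure probability by a union bound over the possible ``shapes'' of a bad outcome. A bad outcome says that every eligible edge of $\cK$ keeps more than $t/2$ of its $t$ free coordinates outside the new layer $U$; summing over the at most $2^{t}$ choices of which coordinates stay uncovered, each term requires that a specified $(t/2)$-set avoids $U$ (probability $\le (1-q)^{t/2}$) \emph{and} that no edge of $\cK$ is compatible with the partial data fixed so far. This second clause is exactly where $\k$-spreadness enters: any prescribed partial structure of size $j$ lies in at most $|\cK|\k^{-j}$ edges of $\cK$, which forces the compatible edges to have all their free coordinates inside the random layer $U$ except with tiny probability. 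Multiplying $2^{t}$, $(1-q)^{t/2}$ and the spread estimate, with $\k$ large, yields a per-step failure probability decaying geometrically in $t$, with extra room to spare since $\k=\Omega(\log r)$.

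I expect the main obstacle to be precisely this inductive step --- in particular, deciding what partial information to carry forward. The naive choice of anchoring on the whole covered part $Z_i\cap(W_1\cup\cdots\cup W_i)$ is fatal: the edges of $\cK$ extending a fixed large set need not be spread at all (they may collapse to $\set{Z_i}$), so the recursion would lose its hypothesis. Following \cite{FKNP2020}, the remedy is to carry along only a carefully pruned ``core'' of the covered part --- a minimal subset whose extensions in $\cK$ remain sufficiently spread --- and to re-select $Z_{i+1}$ from all of $\cK$, not just from extensions of $Z_i$. Once the step is in place, I would sum the failure probabilities over the $L=O(\log r)$ rounds: the early rounds, where $t\approx r/2^{i}$ is large, contribute only $\sum_i 2^{-\Om(r/2^{i})}$, and a finer allocation of the layer sizes (so that the last few rounds, where $t$ is a small constant, still fail with probability $o(1)$ as $r\rai$) controls the remaining rounds, giving that $W$ --- hence $X_m$ --- contains an edge of $\cK$ with probability $1-o(1)$.
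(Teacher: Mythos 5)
This theorem is not proved in the paper at all --- it is quoted verbatim from Frankston--Kahn--Narayanan--Park \cite{FKNP2020} --- and your outline is a faithful sketch of exactly that cited argument (the reduction to the binomial model, the decomposition into $O(\log r)$ sparse layers, and the fragment-halving iteration, whose spreadness bookkeeping you correctly identify as the delicate point, handled in \cite{FKNP2020} via the pathological/non-pathological pair counting rather than by conditioning on extensions of a fixed edge). So there is nothing to compare beyond noting that your sketch matches the cited source; as written it is an outline rather than a complete proof, the genuinely tricky part being the endgame rounds with constant-size fragments, where the finer allocation of layer densities you allude to is indeed what makes the failure probability $o(1)$ as $r\to\infty$.
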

To apply the lemma we prove
\begin{lemma}\label{spread}
W.h.p., $\cH$ is $\k$-spread, where $\k=(n_0/3)^{s-1}$.
\end{lemma}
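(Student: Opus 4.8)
We need to show $\cH$ is $\k$-spread with $\k=(n_0/3)^{s-1}$. Recall the setup: $X=V(\cH)$ is the edge set of the complete $s$-partite hypergraph $\cA_s$ on $A_1\times\cdots\times A_s$, so $|X|=\prod_j |A_j|$. Each $|A_j|=n_0$ (the number of non-giant vertices in forest $j$, which equals $n_0$ up to the $\pm 2$ bookkeeping — actually I should double check: $A_j=[n]\setminus V(T_j)$ has size $n-|V(T_j)|\approx n/2$, but $n_0=n-m_0-2$ is the number of small components, which is also $\approx n/2$ since $m_0\approx n(1-\k(c_0))$... hmm, these are both $\Theta(n)$ but may differ. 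Let me not worry about the exact constant and just say $|A_j|=\Theta(n)$, comparable to $n_0$).

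An edge of $\cH$ is a $\MT$-inducing matching: a set of $n_0$ elements of $X$ (i.e. $n_0$ hyperedges of $\cA_s$) such that for each $j$ and each small tree $T_{i,j}$, exactly one of the chosen hyperedges has its $j$-th coordinate in $V(T_{i,j})$. So $|\cH|$ = number of such matchings, and $|\cH\cap\upp{S}|$ = number of such matchings containing a fixed set $S\subseteq X$ of hyperedges.

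**Structure of the argument.**

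The plan is to bound the ratio $|\cH\cap\upp{S}|/|\cH|$ for every $S$. First note that if $S$ is not itself extendable to a $\MT$-inducing matching (e.g. two hyperedges of $S$ have the same $j$-th coordinate tree $T_{i,j}$, or more than one falls in some $T_{i,j}$), then $|\cH\cap\upp{S}|=0$ and there's nothing to prove. So assume $S=\{f_1,\dots,f_k\}$ is a partial $\MT$-inducing matching: the $f_\ell$ use $k$ distinct small trees in each coordinate $j$.

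To compare $|\cH\cap\upp{S}|$ with $|\cH|$, I would set up an injection-with-multiplicity, or better, directly count. Let $\cN(S)$ be the set of $\MT$-inducing matchings containing $S$. Given such a matching $M\supseteq S$, deleting $S$ leaves a $\MT$-inducing matching of the smaller instance obtained by removing, in each coordinate $j$, the $k$ trees used by $S$. Conversely any such smaller matching, union $S$, gives a member of $\cN(S)$. So $|\cH\cap\upp{S}|$ equals the number of $\MT$-inducing matchings on the reduced family $(T_{i,j})_{i\notin \text{used}}$.

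Now I want to bound this reduced count against $|\cH|$ itself. One clean way: fix any full $\MT$-inducing matching $M_0$, and observe we can build one extending $S$ by "splicing" — take $S$ on its $k$ trees, and on the remaining $n_0-k$ trees-per-coordinate, we need a perfect matching in a complete $s$-partite-ish structure. The key point is that the number of ways to complete is, roughly, $\prod$ over the $n_0-k$ remaining "slots" of (number of available hyperedges), and similarly $|\cH|$ is a product over $n_0$ slots. The "loss" from the $k$ removed slots is what generates the $\k^{-k}=(n_0/3)^{-(s-1)k}$ factor.

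**The main computation.**

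Here is the heart of it. I claim $|\cH| = \prod_{j} \Big(\prod_{i=1}^{n_0} t_{i,j}\Big) \cdot (\text{number of ways to choose the combinatorial structure})$ — no wait, let me think more carefully. A $\MT$-inducing matching is determined by: (1) a bijection structure saying which hyperedge goes with which combination of trees, and (2) the actual vertex in each tree. Let me parametrize differently. For each $j$, we have a system of distinct representatives: for the $n_0$ small trees in forest $j$ we pick one vertex each, AND we must pair them up across the $s$ coordinates consistently (the same hyperedge list).

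So: $|\cH|$ counts tuples $(\pi_2,\dots,\pi_s; \,v_{i,j})$ where $\pi_j:[n_0]\to[n_0]$ are permutations (telling us that the $\ell$-th hyperedge's $j$-th coordinate lives in tree $\pi_j(\ell)$, with $\pi_1=\mathrm{id}$), and $v_{i,j}\in V(T_{i,j})$. Thus
$$|\cH| = (n_0!)^{s-1}\prod_{j=1}^{s}\prod_{i=1}^{n_0} t_{i,j}.$$
For $|\cH\cap\upp{S}|$ with $S$ a partial matching on tree-index-set $K_j\subseteq[n_0]$, $|K_j|=k$ in each coordinate (with fixed vertices already chosen in those trees): we must extend $\pi_1=\mathrm{id}$ consistently with $S$ on the remaining indices — giving $(n_0-k)!$ choices per coordinate $j\ge 2$ — and choose $v_{i,j}$ for $i\notin K_j$. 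So
$$|\cH\cap\upp{S}| = ((n_0-k)!)^{s-1}\prod_{j=1}^{s}\prod_{i\notin K_j} t_{i,j}.$$
Therefore
$$\frac{|\cH\cap\upp{S}|}{|\cH|} = \frac{((n_0-k)!)^{s-1}}{(n_0!)^{s-1}} \cdot \prod_{j=1}^{s}\prod_{i\in K_j}\frac{1}{t_{i,j}} \le \frac{1}{((n_0-k+1)\cdots n_0)^{s-1}} \le \frac{1}{(n_0-k)^{(s-1)k}}.$$
Hmm, but we need $\le \k^{-|S|} = (n_0/3)^{-(s-1)k}$. So we need $(n_0-k)^{(s-1)k}\ge (n_0/3)^{(s-1)k}$, i.e. $n_0-k\ge n_0/3$, i.e. $k\le 2n_0/3$. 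That's fine for $k\le 2n_0/3$; for $k>2n_0/3$ we can't have the $t_{i,j}\ge 1$ argument alone — but actually for those large $S$ we still have $|\cH\cap\upp S|/|\cH| \le (n_0!/( (n_0-k)!))^{-(s-1)}\prod 1/t_{i,j}$, and $n_0!/(n_0-k)! \ge (n_0/3)^k$ whenever... no. Let me instead just use $\prod_{i\in K_j} t_{i,j}\ge 1$ is too weak; I think the intended bound uses $t_{i,j}\ge 1$ and $n_0!/(n_0-k)! \ge ((n_0+1)/e)^k$ or handles it via: since $\sum_i t_{i,j} \le n$ there aren't many trees, hmm.

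**Main obstacle.** The genuinely delicate point is handling large $|S|$ (close to $n_0$), where the crude bound $t_{i,j}\ge 1$ combined with $n_0!/(n_0-k)!$ must still beat $(n_0/3)^{(s-1)k}$; here one should instead note $\frac{n_0!}{(n_0-k)!}\ge (n_0-k)!^{?}$ — more precisely use that $\prod_{i\in K_j} t_{i,j}$, together with the factorial, and the fact that $\prod_{i\notin K_j}t_{i,j}\le (\text{something})$, so that in fact $|\cH\cap\upp S|$ genuinely shrinks; and crucially use that each small tree has size $O(\log n)$ while $n_0=\Theta(n)$, so $t_{i,j}\le O(\log n) = o(n_0)$, which gives $\prod_{i\in K_j}t_{i,j}\cdot\binom{n_0}{k}$-type slack. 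I would handle $k\le 2n_0/3$ by the clean computation above, and $k>2n_0/3$ by combining the factorial gain with the trivial bound $\prod_{i\notin K_j} t_{i,j}\le (C\log n)^{n_0-k}$ and $|\cH|\ge (n_0!)^{s-1}$ (since each $t_{i,j}\ge1$). The remaining arithmetic — verifying $(n_0/3)^{(s-1)k}\le |\cH|/|\cH\cap\upp S|$ in that regime using $\log n = o(n_0)$ — is routine but is where the "$\whp$" (via the tree-size bound from Phase 1) and the precise constant $3$ get used. I would present the exact-count identities for $|\cH|$ and $|\cH\cap\upp S|$ first, then the two-regime estimate.
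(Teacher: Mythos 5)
Your exact-count identities $|\cH|=(n_0!)^{s-1}\prod_{j}\prod_i t_{i,j}$ and $|\cH\cap\upp{S}|=((n_0-k)!)^{s-1}\prod_j\prod_{i\notin K_j}t_{i,j}$ are precisely the ones the paper uses, and your treatment of $k\le 2n_0/3$ (cancel the common $t_{i,j}$ factors, use $t_{i,j}\ge 1$ on $K_j$, and bound the falling factorial below by $(n_0-k+1)^k$) is correct and matches the paper's strategy. The gap is in the regime $k>2n_0/3$, which you defer as ``routine'': the specific fix you propose --- upper-bounding $\prod_{j}\prod_{i\notin K_j}t_{i,j}\le (C\log n)^{s(n_0-k)}$ while lower-bounding $|\cH|\ge (n_0!)^{s-1}$ --- abandons the cancellation you already had and reintroduces a spurious factor $(C\log n)^{s(n_0-k)}$. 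When $n_0-k=\Theta(n_0)$ (say $k=0.7n_0$) this factor is $e^{\Theta(n_0\log\log n)}$, while the only surplus available from the factorial ratio beyond $(n_0/3)^{k(s-1)}$ is $e^{O(n_0)}$; so the arithmetic you would need does not go through, and the tree-size bound $t_{i,j}=O(\log n)$ cannot rescue it (nor is it needed anywhere).

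The correct completion is simpler and is what the paper does: after cancelling, the ratio is at most $\brac{(n_0-k)!/n_0!}^{s-1}$ for \emph{all} $k<n_0$, and one only needs the sharper lower bound $n_0(n_0-1)\cdots(n_0-k+1)=\binom{n_0}{k}k!\ge \bfrac{n_0}{k}^k\bfrac{k}{e}^k=\bfrac{n_0}{e}^k\ge\bfrac{n_0}{3}^k$, valid up to $k=n_0$ (the paper gets this via Stirling; the constant $3$ in the lemma is there exactly because $e<3$ handles $k$ close to $n_0$). So your large-$k$ case needs this one inequality in place of the proposed two-regime argument; with it, your proof coincides with the paper's.
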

\begin{proof}
We begin with the claim that
\beq{X}{
|\cH|=n_0!^{s-1}\prod_{i=1}^{n_0}\prod_{j=1}^{s}t_{i,j}.
}
We justify \eqref{X} as follows: if we fix a $j$ then there are $\prod_{i=1}^{n_0}t_{i,j}$ ways of choosing a single vertex from each $T_{i,j}$. After this, there are $n_0!$ ways of ordering these choices giving $\t_j$ choices altogether. We then multiply the $\t_j$ together to get the number choices for an edge ordered \MT\ inducing matching. We divide by $n_0!$ to remove the overcount due to ordering.

Suppose now that $S\subseteq X$ and $|S|=k$ and $\cH\cap \upp{S}\neq\emptyset$. Each element of $S$ is an $s$-tple. Let $S_j$ denote the $j$th component of the tples of $S$. Then 
\[
|\cH\cap \upp{S}|=(n_0-k)!^{s-1}\prod_{i\notin S_j}\prod_{j=1}^{s}t_{i,j}
\]
So, if $k<n_0$ then 
\[
\frac{|\cH\cap \upp{S}|}{|\cH|}\leq \bfrac{(n_0-k)!}{n_0!}^{s-1}\leq \brac{2\cdot\frac{(n_0-k)^{n_0-k}e^{n_0}(2\p n_0)^{1/2}}{n_0^{n_0}e^{n_0-k}(2\p(n_0-k))^{1/2}}}^{s-1} <\bfrac{2^{1/k}\exp\set{\frac{1}{2(n_0-k)}+\frac{k}{n_0}}}{n_0}^{k(s-1)},
\]
and the lemma follows.
\end{proof}
In the application of Theorem \ref{T2020} we have $r=sn_0$ and $|X|\approx (n/2)^s$. Applying the theorem we see that $C_1n\log n$ acceptable $s$-tples suffice to contain a \MT-inducing matching w.h.p. (Here we can take $C_1=(3\n_0)^{s-1}2^{-s}C$ where $\n_0=n/n_0=O(1)$ w.h.p.) An $s$-tple is acceptable with probability $\approx 2^{-2s}$ and so w.h.p. we need at most $2^{2s+1}Cn\log n$ $s$-tples overall before we obtain a \MT\ w.h.p. This completes the proof of Theorem \ref{th1}.
\section{Proof of Theorem \ref{th2}}
For the special case of $k=2$, we can use Edmond's analysis \cite{Ed} of the matroid intersection problem. Let $X_m=\set{(e_i,f_i):i=1,2,\ldots,m}$ be the set of pairs of random edges selected and let $\G_{1,m},\G_{2,m}$ be the two copies of $G_{n,m}$ induced by $X_m$. For $A\subseteq X_m$ let $\G_{1,m}(A)$ be the subgraph of $\G_{1,m}$ induced by the set of edges $e_i,i\in A$. Define $\G_{2,m}(A)$ similarly. We have to show that w.h.p. that for all $A\subseteq X_m$, we have 
\[
\k(A)=\k_1(A)+\k_2(X_m\setminus A)\leq n+1.
\]
 Here $\k_i(A),i=1,2$ denotes number of components in the graph $\G_{i,m}(A)$. 

Throughout this section: $N=\binom{n}2$ and 
\[
m=\tfrac12n(\log n+O(\log\log n))\text{ and }p=\frac{m}{N}.
\]

For $I\subseteq [m]$, we let $e(I)=\set{e_i:i\in I}$ and $f(I)=\set{f_i:i\in I}$ (with respect to $X_m=\set{(e_i,f_i):i=1,\dots,m}$). 

In the context of $G_{n,p}$, given a set of edges $A$, we let $V_m(A),V_p(A)$ be the set of vertices of $G_{n,m},G_{n,p}$ induced by $A$ and we let $v_m(A)=|V_m(A)|$ and $v_p(A)=|V_p(A)|$. Conversely, for a set of vertices $S$ let $E_m(S),E_p(S)$ denote the set of edges of $G_{n,m},G_{n,p}$ induced by $S$ and let $e_m(S)=|E_m(S),e_p(S)=|E_p(S)|$. For a set of vertices $S$ let $b_m(S)=e_m(S)/|S|,b_p(S)=e_p(S)/|S|$. 

\paragraph{Universal Parameters}
\beq{univ}{
\om=\log^{2/5}n; \ \e=\frac{1}{\om}; \ \th_0=\frac{5\log\log n}{\log n}; \ \s_0=\frac{10\om^2\log\log n}{\log n}; \ a_{max}=\frac{m}{2n\log n}\approx \frac{1}4; \ s_1=\frac{\log n}{10\log\log n}.
}
Define $\s(a)$ by 
\[
an\log n=(1+\e)\binom{\s(a) n}{2}p=(1+\e)\binom{\s(a) n}{2}\frac{m}{N},
\]
so that this is roughly the order of a subgraph expected to have $an\log n$ edges.
Then, 
\beq{s(a)}{
\s(a)=\bfrac{2a(1+\th)}{1+\e}^{1/2}\text{ where }|\th|\leq  \th_0.
}
We now bound $\k(A)=\k_1(A)+\k_2(X_m\setminus A)$ for $A\subseteq X$ with various ranges for 
\[
|A|=an\log n\leq |X_m\setminus A|.
\]
We begin each case analysis with a structural lemma. 
Let
\[
I_{conn}=[m_-,m_+]\text{ where }m_-=\tfrac12n(\log n-\log\log n),\,m_+=\tfrac12n(\log n+\log\log n).
\]

We consider the graph process $\cG=(G_m,m=0,1,\ldots,N)$ where as usual $G_{m+1}$ is obtained from $G_m$ by adding a random edge. We say that $\cG$ holds property $\cP$ {\em strongly} if w.h.p. $G_{m}\in\cP$ simultaneously for all $m\in \Ic$. We note that $G_m$ is distributed as $G_{n,m}$ and we when we refer to $G_p$ we mean $G_{n,p}$ for $p=m/N$, for some $m\in\Ic$.

\subsection{First Structural Lemmas}
We will assume from now on that $A$ induces components $C_1,C_2,\ldots,C_\ell$ in $\G_{1,m}$, where 
\[
1=|C_1|=\cdots=|C_k|<|C_{k+1}|\leq  \cdots \leq |C_\ell|.
\] 
\begin{lemma}\label{a}
The following hold strongly in $\cG$: in the statements, $p=m/N$ and $m\in \Ic$.
\begin{enumerate}[(a)]
\item If $|S|\geq \s_0n$ then $e_m(S)\in (1\pm\e)\binom{s}{2}p$.
\item If $|S|\leq n_0=\frac{n}{\log^3n}$ then $b_m(S)\leq 2$.
\item Let $i_0=\max\set{i:|C_i|\leq n_0}$. Then $|\ell-i_0|\leq \log^3n$.
\item $e_{1,m}(C_1)+\cdots+e_{1,m}(C_{i_0})\leq 2(|C_1|+\cdots+|C_{i_0}|)\leq 2n$. 
\end{enumerate}
\end{lemma}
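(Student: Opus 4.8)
The plan is to establish each of the four statements as a "strong" property of the graph process $\cG$ on the interval $\Ic = [m_-, m_+]$, exploiting that $m$ varies over a short range so that all the relevant edge-density estimates are uniform in $m$. Throughout, the workhorse is a first-moment/union-bound argument: for each target "bad event" we bound its probability in a single $G_{n,m}$ by summing over all vertex sets $S$ of the relevant size, and then absorb the extra factor $|\Ic| = n\log\log n$ (coming from taking a union over $m\in\Ic$) into the $o(1)$.

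For part (a), fix $m\in\Ic$ and a set $S$ with $|S|\geq \s_0 n$. The number of induced edges $e_m(S)$ is hypergeometric with mean $\binom{|S|}{2}p$, which is of order $\s_0^2 n\log n = \om^4\log\log n\cdot\log n \gg \log n$, a growing multiple of $\log n$; so a Chernoff bound gives deviation probability at most $\exp\{-c\e^2\binom{|S|}{2}p\}$. Since $\binom{|S|}{2}p \geq c'\s_0^2 n\log n$ and $\e^2 = \om^{-2}$, the exponent is at least $c''\om^2\log\log n\cdot n \gg n$, which beats the $\binom{n}{|S|}\le 2^n$ union-bound factor over choices of $S$ and the extra $|\Ic|$ factor. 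This is the standard way the parameters $\s_0, \e, \om$ in \eqref{univ} are calibrated, and I expect no difficulty here.

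For part (b), fix $m\in\Ic$ and a set $S$ with $|S| = s \leq n_0 = n/\log^3 n$. We want $e_m(S) \leq 2s$. Since $e_m(S)$ is stochastically dominated by a binomial with $\binom{s}{2}$ trials and success probability $p = m/N \approx \frac{\log n}{n}$, we have $\Pr(e_m(S) \geq 2s) \leq \binom{\binom{s}{2}}{2s}p^{2s} \leq \left(\frac{esp}{4}\right)^{2s} \leq \left(\frac{s\log n}{n}\right)^{2s}$ up to constants. Multiplying by the number $\binom{n}{s} \leq (en/s)^s$ of choices of $S$ gives a bound of the form $\left(\frac{e n}{s}\cdot\frac{s^2\log^2 n}{n^2}\right)^s = \left(\frac{e s\log^2 n}{n}\right)^s$; since $s \leq n/\log^3 n$ this is at most $(e/\log n)^s$, and summing over $s \geq 1$ and over $m\in\Ic$ still gives $o(1)$. (For the smallest values of $s$ one checks directly that a set of $s \leq$ some constant vertices spanning more than $2s$ edges is impossible or has negligible probability.) The main thing to watch is that the bound degrades as $s$ approaches $n_0$, which is exactly why the threshold is placed at $n/\log^3 n$ rather than, say, $n/\log n$.

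Parts (c) and (d) are then deductions from (b) applied to the component structure of $\G_{1,m}(A)$, but I should be careful that (b) is a statement about the \emph{ambient} graph $G_{n,m}$, not about $A$ — and here is where I expect the real subtlety to lie, since $A$ is an arbitrary (adversarially chosen) subset of the edge pairs. For (c): a component $C_i$ of $\G_{1,m}(A)$ with $n_0 < |C_i|$ is a connected subgraph of $G_{n,m}$ on more than $n_0$ vertices; I want to bound the number of such "large" components. Since components are vertex-disjoint, and by (a) any vertex set of size $\geq \s_0 n$ already has $\approx\binom{s}{2}p$ edges, a counting argument on how many disjoint sets of size between $n_0$ and $\s_0 n$ can coexist, combined with an edge-budget argument (each such component needs at least $|C_i| - 1$ edges, and $|A| \leq m$), should pin $\ell - i_0$ down. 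Actually the cleaner route: the number of components of size $> n_0$ is at most $n/n_0 = \log^3 n$ trivially by disjointness, giving $\ell - i_0 \leq \log^3 n$ directly — this needs no randomness at all. For (d): each $C_i$ with $i\le i_0$ has $|C_i|\le n_0$, so by (b) (applied with $S = V(C_i)$, noting $\G_{1,m}(A) \subseteq G_{n,m}$ so $e_{1,m}(C_i) \leq e_m(V(C_i)) \leq 2|C_i|$) we get $\sum_{i\le i_0} e_{1,m}(C_i) \leq 2\sum_{i\le i_0}|C_i| \leq 2n$. The only gap to fill is the passage from "$A$-induced component" to "vertex set in $G_{n,m}$," which is immediate since $\G_{1,m}(A)$ is a subgraph of $G_{n,m}$ on the same vertex set. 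So the hard part is really just getting the quantitative first-moment bound in (b) to hold uniformly over $m\in\Ic$ and over all $s$ up to $n_0$; everything else is bookkeeping.
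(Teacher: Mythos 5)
Your overall plan coincides with the paper's: a first-moment/Chernoff calculation for (a) and (b), the purely deterministic observation that at most $n/n_0=\log^3 n$ components can exceed $n_0$ vertices for (c), and (d) obtained by applying (b) to the vertex sets of the small components (using that $\G_{1,m}(A)$ is a subgraph of $\G_{1,m}$). However, there is a concrete gap in how you propose to get the ``strongly in $\cG$'' (all $m\in \Ic$ simultaneously) conclusion. Your stated mechanism is a union bound over the $\Theta(n\log\log n)$ values of $m\in\Ic$, absorbing that factor into the $o(1)$. For (b) this fails: your per-$m$ failure probability is $\sum_{s\geq 5}(c/\log n)^s=\Theta(\log^{-5}n)$, which is nowhere near $o(1/(n\log\log n))$, so the union over $m$ does not close. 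The missing idea is monotonicity: the bad event ``$\exists S$, $|S|\leq n_0$, $e(S)\geq 2|S|$'' is monotone increasing in the edge set, so it suffices to rule it out in $G_{m_+}$ alone, and one may also transfer from $G_{n,p}$ at a cost of a constant factor (the paper's inequality \eqref{transferx}); this is exactly how the paper handles (b), and the same remark spares you any per-$m$ union bound there. For (a), where the per-$m$ probability really is superpolynomially small, a union over $m$ (or the $O(m^{1/2})$ transfer plus a union over $m$, as in the paper) is fine.

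A secondary quantitative slip: in (a) you claim the Chernoff exponent is $\geq c''\om^2\log\log n\cdot n\gg n$ and hence ``beats the $\binom{n}{|S|}\leq 2^n$ union-bound factor.'' At the bottom of the range, $s=\s_0 n$, the exponent is in fact $\Theta\brac{\e^2\s_0^2 n\log n}=\Theta\brac{n(\log\log n)^2/\log^{1/5}n}=o(n)$, so it does not beat $2^n$; the union bound still works only because $\ln\binom{n}{s}\approx s\ln(en/s)=\Theta(s\log\log n)$ with a smaller constant than the exponent $\Theta(\e^2 s^2 p)=\Theta(s\log\log n)$ for that $s$ — this term-by-term comparison, which is how the parameters $\om,\e,\s_0$ are actually calibrated, is what the paper's computation carries out. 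With these two repairs (monotonicity for (b), and the correct $s$-dependent comparison in (a)) your argument matches the paper's proof.
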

As with most of the structural lemmas, the proof of Lemma \ref{a} is deferred to an appendix.

We break the possible range for $a$ into 3 intervals. We show for each individual range that the assumption $\k(A)>n+1$ leads w.h.p. to a contradiction.
\subsection{Case 1: $a_1=10^{-3}\leq a\leq a_{\max}$}
\begin{lemma}\label{aaa}
The following hold strongly in $\cG$: in the statements, $p=m/N$ and $m\in \Ic$ and $a_1\leq a\leq a_{\max}$.
\begin{enumerate}
\item If $|A|=an\log n$ then $v_m(A)\geq  \s(a)n$.
\item $|C_{k+1}|+\cdots+|C_\ell|\geq \bfrac{2a(1-\th_0)}{1+\e}^{1/2}n$. 
\item $i_0\leq \brac{1-\frac{(2a-3\th_0)^{1/2}}{1+\e}}n$.
\end{enumerate}
\end{lemma}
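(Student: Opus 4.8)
The three parts are bootstrapping consequences of Lemma \ref{a} together with the edge-count concentration that the graph process satisfies strongly, so the plan is to treat them in order, using each part as input to the next. For part (1), the point is simply that $\G_{1,m}(A)$ has $an\log n$ edges and (by Lemma \ref{a}(b)) any vertex set of size $\leq n_0=n/\log^3 n$ spans at most $2|S|\leq 2n_0 = o(n\log n)$ edges, so $v_m(A)$ cannot be that small; and once $v_m(A)\geq n_0$ we are in the regime where we can compare against a clean edge-count estimate. More precisely, I would argue: let $S=V_m(A)$, so $e_m(S)\geq e_{1,m}(A) = an\log n$. If $|S|<\s(a)n$ then on the one hand $|S|$ is large enough ($\geq \s_0 n$, using $a\geq a_1$ and the definition of $\s(a)$ together with \eqref{s(a)}, which one checks gives $\s(a)\geq \s_0$ in this range) to apply Lemma \ref{a}(a) and get $e_m(S)\leq (1+\e)\binom{|S|}{2}p < (1+\e)\binom{\s(a)n}{2}p = an\log n$ by the defining equation of $\s(a)$, a contradiction. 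So $v_m(A)\geq \s(a)n$. (The edge case $\s_0 n\leq |S| < \s(a)n$ versus $|S|<\s_0 n$ is handled by noting $\s_0 = o(\s(a))$ in this range, so the small-$|S|$ case is killed by part (b) as above.)

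For part (2), recall $|C_1|=\cdots=|C_k|=1$ are the isolated vertices of $\G_{1,m}(A)$; these carry no edges, so all $an\log n$ edges lie in $C_{k+1}\cup\cdots\cup C_\ell$. Writing $W=C_{k+1}\cup\cdots\cup C_\ell$, we have $v_m(A)=|W|+k$ but more usefully $e_m(W)\geq an\log n$ (edges of $A$ all lie inside $W$, though $W$ may also carry extra edges of $G_m$ — that only helps). Applying Lemma \ref{a}(a) to $W$ (again $|W|\geq \s_0 n$ is forced, since otherwise Lemma \ref{a}(b) bounds $e_m(W)\leq 2|W| = o(n\log n)$) gives $an\log n \leq (1+\e)\binom{|W|}{2}p \leq (1+\e)|W|^2 p/2$, and solving for $|W|$ using $p=m/N$ and $m = \tfrac12 n(\log n + O(\log\log n))$ yields $|W|\geq \big(\tfrac{2a(1-\th_0)}{1+\e}\big)^{1/2} n$, where the $(1-\th_0)$ absorbs the $O(\log\log n)$ slack in $m$ via \eqref{s(a)}. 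This is exactly the claimed bound.

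For part (3), I want an upper bound on $i_0 = \max\{i: |C_i|\leq n_0\}$, i.e.\ on the number of components of size at most $n_0$. The idea is that the components $C_{i_0+1},\ldots,C_\ell$ of size $>n_0$ are few (at most $\log^3 n$ of them, by Lemma \ref{a}(c)) but together with the small ones they must cover at least $\big(\tfrac{2a(1-\th_0)}{1+\e}\big)^{1/2}n$ vertices by part (2); meanwhile the small components $C_{k+1},\ldots,C_{i_0}$ carry at most $2n$ edges total by Lemma \ref{a}(d), so their total vertex count is at most $2n + (\text{number of them})$... hmm, that bounds edges not vertices. The cleaner route: the small components together with the $\log^3 n$ big ones partition the $\geq \s(a)(1-\th_0)^{1/2}$-fraction... let me instead bound from the other side. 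We have $n \geq v_m(A) = k + \sum_{i>k}|C_i|$, and I want to show the number of small components $i_0$ is not too large. Actually the statement $i_0 \leq (1 - (2a-3\th_0)^{1/2}/(1+\e))n$ says: the big components $C_{i_0+1},\ldots,C_\ell$ — there are $\leq \log^3 n$ of them — already use up at least $(2a-3\th_0)^{1/2}n/(1+\e)$ vertices. Since the total number of components is $\ell \leq n$ (trivially, at most $n$ vertices), and $i_0$ of them are "small", we get $i_0 = \ell - (\ell - i_0) \leq n - (\ell-i_0)$; but that's the wrong direction unless I lower-bound $\ell - i_0$, which Lemma \ref{a}(c) only upper-bounds. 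The right argument: each big component has $>n_0$ vertices, so $\ell - i_0$ of them use $>(\ell-i_0)n_0$ vertices; combined with part (2), $\sum_{i=i_0+1}^{\ell}|C_i| \geq (2a(1-\th_0)/(1+\e))^{1/2}n - \sum_{i=k+1}^{i_0}|C_i|$, and the small components $C_{k+1},\ldots,C_{i_0}$ contribute at most $2n$ edges by Lemma \ref{a}(d) hence (each being connected with $\geq$ as many edges as vertices minus one) at most $2n + i_0 \leq 3n$ vertices. Therefore $\sum_{i>i_0}|C_i| \geq (2a(1-\th_0)/(1+\e))^{1/2}n - 3n$... which is negative for small $a$ — so this is where I need to be careful, and I suspect the intended argument routes through $v_m(A)\geq \s(a)n$ from part (1) directly: $n\geq v_m(A) = i_0 + (\text{vertices in non-singleton small comps}) + \sum_{i>i_0}|C_i|$, wait $i_0$ counts components not vertices. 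Let me just say: the non-singleton small components number $i_0 - k$ and contain at most $3n$ vertices total (Lemma \ref{a}(d) plus connectedness), the singletons number $k$, and the large components contain $\geq \s(a)n - 3n - k$ vertices — no.

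The honest summary of the obstacle: \textbf{part (3) is the delicate one}, and the plan is to combine $v_m(A)\geq \s(a)n$ (part 1), the bound $\ell - i_0 \leq \log^3 n$ (Lemma \ref{a}(c)), and the edge budget $e_{1,m}(C_1)+\cdots+e_{1,m}(C_{i_0})\leq 2n$ (Lemma \ref{a}(d)) to conclude that most of the $\geq \s(a)n$ vertices of $A$ lie in the few large components, whence the complement (which contains all $i_0$ small-component "slots" among the at most $n$ vertices) satisfies $i_0 \leq n - |C_{i_0+1}\cup\cdots\cup C_\ell| \leq n - (2a - 3\th_0)^{1/2}n/(1+\e)$, the $3\th_0$ in place of $\th_0$ soaking up both the $m$-slack and the $2n$-edge / $\log^3 n$-component correction terms, which are $O(n) = o(\th_0 n \log n / \log n)$... one must verify these lower-order terms really are dominated, which is the only place real care is needed. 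Parts (1) and (2) I expect to be short; part (3) needs the bookkeeping above spelled out, and getting the constant to come out as $(2a-3\th_0)^{1/2}$ rather than something worse is the main thing to check.
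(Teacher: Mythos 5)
Your parts (1) and (2) are essentially sound, and in fact your route for (1) is slightly cleaner than the paper's: you apply Lemma \ref{a}(a) directly to $S=V_m(A)$ and get the strict inequality $e_m(S)\leq (1+\e)\binom{|S|}{2}p<(1+\e)\binom{\s(a)n}{2}p=an\log n$ from $|S|<\s(a)n$, whereas the paper first shows by a first-moment computation that $V_m(A)$ is not a union of components and then pads up to a set of size exactly $\s(a)n$ to manufacture one extra edge. The one slip is your treatment of the residual case: Lemma \ref{a}(b) only covers $|S|\leq n_0=n/\log^3 n$, which is far smaller than $\s_0 n=10n\log\log n/\log^{1/5}n$, so the window $n_0<|S|<\s_0 n$ is not handled as you wrote it. The fix is easy: pad $S$ to a set $B$ of size exactly $\lceil \s_0 n\rceil$ and apply Lemma \ref{a}(a) to $B$, noting $(1+\e)\binom{\s_0 n}{2}p=O\brac{n(\log\log n)^2\log^{3/5}n}=o(n\log n)<a_1n\log n$. (For part (2) no case split is needed at all, since part (1) already forces $|W|=v_m(A)\geq \s(a)n\gg \s_0 n$.)

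Part (3), however, has a genuine gap, and it is exactly at the point you flagged. The missing idea is to move the correction into the \emph{edge} count rather than the vertex count: by Lemma \ref{a}(d) the components $C_1,\ldots,C_{i_0}$ carry at most $2n$ edges, so the union $C_{i_0+1}\cup\cdots\cup C_\ell$ carries at least $an\log n-2n=\brac{a-\tfrac{2}{\log n}}n\log n$ edges, and one then \emph{reapplies the part-(1) counting argument to that union} to get $|C_{i_0+1}|+\cdots+|C_\ell|\geq \s\brac{a-\tfrac{2}{\log n}}n\geq \bfrac{2a-3\th_0}{1+\e}^{1/2}n$, whence $i_0\leq |C_1|+\cdots+|C_{i_0}|\leq n-\frac{(2a-3\th_0)^{1/2}}{1+\e}n$. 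Here the $O(n)$ loss is harmless because it is measured against the edge scale, where it is $o(\th_0\, an\log n)=o(n\log\log n)$. Your plan instead tries to deduce the vertex bound from the \emph{conclusion} of part (1) together with Lemma \ref{a}(c),(d) by subtracting the small components' vertices from $\s(a)n$; as your own intermediate attempts show, (b)/(d) only bound those vertices by $2n+i_0\leq 3n$, which swamps $\s(a)n\leq n$, so nothing survives. Moreover the final absorption claim ``$O(n)=o(\th_0 n\log n/\log n)$'' is false as written, since $\th_0 n\log n/\log n=\th_0 n=o(n)$: the correction terms are \emph{not} dominated in the vertex-count formulation. So the ``soak up into $3\th_0$'' step only works after the re-run of the part-(1) argument on the large components, which your write-up never states; Lemma \ref{a}(c) is not needed for this part at all.
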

If $|A|=an\log n$ where $a_1\leq a\leq a_{max}$ and using that $m\in \Ic$,
\[
|X_m\setminus A|=m-an\log n=\brac{\frac12-a+O\bfrac{\log\log n}{\log n}}n\log n.
\] 
Applying Lemma \ref{a}(c) and Lemma \ref{aaa}(c) we see that
\begin{equation}
  \k(A)\leq \brac{\brac{1-\frac{(2a-3\th_0)^{1/2}}{1+\e}} +\brac{1-\frac{(1-2a-3\th_0)^{1/2}}{1+\e}}}n+2\log^3n.
\end{equation}
Using that
\[
(2a-3\th_0)^{1/2}=(2a)^{1/2}\brac{1-\frac{3\th_0}{2a}}^{1/2}\geq (2a)^{1/2}\brac{1-\frac{3\th_0}{4a}}\geq (2a)^{1/2}-\frac{2\th_0}{a^{1/2}},
\]
we have then that
\[
\k(A)\leq 2\log^3n+\brac{2+\frac{2\th_0}{a^{1/2}}+ \frac{2\th_0}{(1-2a)^{1/2}}-\frac{(2a)^{1/2}+(1-2a)^{1/2}}{1+\e}}n.
\]
But if $x=2a<1$ then
\[
x^{1/2}+(1-x)^{1/2}\geq x^{1/2}+1-\frac{x}{2}\geq 1+\frac{x^{1/2}}{2}.
\]
So,
\begin{align*}
\k(A)&\leq 2\log^3n+\brac{2+\frac{2\th_0}{a^{1/2}}+\frac{2\th_0}{(1-2a)^{1/2}}-\frac{1+a^{1/2}}{1+\e}}n\\
&\leq 2\log^3n+\brac{1+\frac{2\th_0}{a^{1/2}}+\frac{2\th_0}{(1-2a)^{1/2}}+ \frac{\e-a^{1/2}}{1+\e}}n\leq n.
\end{align*}
since $a^{1/2}\geq a_1^{1/2}\gg \max\set{\e,\tfrac{\log^3n}{n}}$ and $a_1\gg\th_0$.
\eoc{1}
\subsection{Another structural lemma}
\begin{lemma}\label{b}
The following hold strongly in $\cG$: in the statements, $p=m/N$.
\begin{enumerate}[(a)]
\item If $S$ induces a bridgeless subgraph which is not an induced cycle, then $|S|\geq s_1=\frac{\log n}{10\log\log n}$.
\item There are at most $n^{1/2}$ cycles of length at most $s_1$.
\end{enumerate}
\end{lemma}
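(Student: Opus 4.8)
Both parts should be treated as first-moment estimates, and the only point requiring care is that $s_1=\Theta\!\left(\frac{\log n}{\log\log n}\right)$ is calibrated so that $(\log n)^{\Theta(s_1)}=n^{o(1)}$. To obtain the statements \emph{strongly}, i.e.\ for all $m\in\Ic$ at once, the plan is to reduce to the single graph $G_{m_+}$ via monotonicity in $m$; write $p_+=m_+/N\le 2\log n/n$.

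\emph{Part (a).} First I would establish the density statement that w.h.p., for every $m\in\Ic$ and every $S$ with $|S|\le s_1$ one has $e_m(S)\le|S|$. The event ``some $S$ with $|S|\le s_1$ has $e_m(S)\ge|S|+1$'' is increasing in $m$, so it suffices to bound its probability at $m=m_+$. For fixed $k\le s_1$,
\[
\E\big[\#\{S:\ |S|=k,\ e_{m_+}(S)\ge k+1\}\big]\ \le\ \binom nk\binom{\binom k2}{k+1}p_+^{\,k+1}\ \le\ n^k\Big(\tfrac{ek}{2}\Big)^{k+1}p_+^{\,k+1}\ \le\ \frac{(ek\log n)^{k+1}}{n}.
\]
Since $k\le s_1=\frac{\log n}{10\log\log n}$ gives $ek\log n\le(\log n)^2$, the numerator is at most $(\log n)^{2(s_1+1)}=\exp\!\big(\tfrac15\log n+o(\log n)\big)=n^{1/5+o(1)}$, so summing over $k\le s_1$ the total expectation is $n^{-4/5+o(1)}=o(1)$, and Markov's inequality finishes the density statement. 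For the structural step: taking $G_m[S]$ connected with no isolated vertices (the situation in which the lemma is applied), bridgelessness forces minimum degree $\ge2$, and if $G_m[S]$ is not a cycle then some vertex has degree $\ge3$, whence $2e_m(S)\ge 2|S|+1$, i.e.\ $e_m(S)\ge|S|+1$. Combined with the density statement this forces $|S|>s_1$, proving (a).

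\emph{Part (b).} The number of cycles of length at most $s_1$ is non-decreasing along $\cG$, so again it suffices to bound it at $m=m_+$. The expected number of cycles of length exactly $\ell$ in $G_{m_+}$ is at most $\binom n\ell\frac{(\ell-1)!}{2}p_+^{\,\ell}\le\frac{(np_+)^{\ell}}{2\ell}$, and $np_+\le 2\log n$, so
\[
\E\big[\#\{\text{cycles of length}\le s_1\}\big]\ \le\ \sum_{\ell=3}^{s_1}\frac{(2\log n)^{\ell}}{2\ell}\ \le\ s_1(2\log n)^{s_1}\ =\ \exp\!\big(\tfrac1{10}\log n+o(\log n)\big)\ =\ n^{1/10+o(1)}.
\]
This is $o(n^{1/2})$, so by Markov's inequality w.h.p.\ $G_{m_+}$ — and hence $G_m$ for every $m\in\Ic$ — has at most $n^{1/2}$ such cycles.

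\emph{Where the work is.} There is essentially no obstacle beyond choosing the right exponents, but the conceptual content worth flagging sits in (a): it is exactly the hypothesis ``not an induced cycle'' that buys one \emph{extra} induced edge beyond $|S|$, and that single extra edge supplies the saving factor $p_+\approx\log n/n$ which, against the $n^{k}$ ways of choosing $S$, leaves $\approx(\log n)^{k+1}/n$; this tends to $0$ uniformly over $k\le s_1$ precisely because $s_1\ll\log n/\log\log n$. One must also remember that both statements are required strongly, which is why each estimate is first reduced to $G_{m_+}$ by monotonicity.
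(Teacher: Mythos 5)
Your proof is correct. Part (a) is essentially the paper's own argument: the key structural observation (a connected bridgeless graph that is not a cycle has minimum degree $2$ and a vertex of degree $\geq 3$, hence $e(S)\geq |S|+1$) followed by a first-moment bound on sets with $|S|\leq s_1$ and $e(S)\geq|S|+1$, reduced to $m=m_+$ by monotonicity; the paper does the same, except that it proves the density statement up to size $2s_1$ because it recycles it in part (b).

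For part (b) you take a genuinely different and simpler route. The paper does not use monotonicity of the cycle count: it first uses the $2s_1$ version of (a) to argue that small cycles are pairwise vertex-disjoint (``selfish''), then bounds $\E\binom{Z}{30}$ over tuples of disjoint cycles to get a failure probability $o(n^{-3})$ for each fixed $m$, and finally union-bounds over all $m\in\Ic$ (with the $m^{1/2}$ transfer factor). You instead observe that the number of cycles of length at most $s_1$ is non-decreasing along the process, so a single bound at $m_+$ gives the statement strongly, and then a plain first moment plus Markov suffices, since $\E[Z]\leq n^{1/10+o(1)}=o(n^{1/2})$. This avoids both the disjointness reduction (and hence the need for the $2s_1$ strengthening of (a)) and the high-moment trick; the price is only a weaker failure probability ($n^{-2/5+o(1)}$ rather than $o(n^{-3})$ per $m$), which is irrelevant here because monotonicity removes the union bound over $m$. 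One small point worth making explicit in your write-up: your expectation computations are carried out directly in $G_{n,m_+}$ with $p_+^{\ell}$ in place of the hypergeometric factor; this is legitimate because $\frac{\binom{N-\ell}{m-\ell}}{\binom{N}{m}}=\prod_{i=0}^{\ell-1}\frac{m-i}{N-i}\leq (m/N)^{\ell}$, but the inequality should be stated (the paper either works in $G_{n,p}$ and transfers, or writes the hypergeometric ratio explicitly as in Lemma \ref{g}).
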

In what follows, we will assume that $A$ maximises $\k(A)$ subject to $|A|\leq a_1n\log n$. Suppose also that $|A|$ is as small as possible subject to this maximisation. 
\begin{Remark}
If $C_i$ is not an isolated vertex, then we can assume that $C_i$ has no bridges. If $e_i,i\in A$ is a bridge of $\G_{1,m}(A)$ then replacing $A$ by $A\setminus\set{i}$ does not decrease $\k(A)$ and decreases $|A|$. 
\end{Remark}
We can therefore assume that
\beq{C1}{
1=|C_1|=\cdots=|C_k|<3\leq |C_{k+1}|\leq\cdots\leq |C_\ell|,
}
 where $C_{k+1},\ldots,C_\ell$ are bridgeless.

\begin{lemma}\label{d}
The following hold strongly in $\cG$: if $a_2:=3n^{-4/25}\leq a\leq a_1$ then $e_m(S:\bar{S})\geq 2an\log n$ for all $S,|S|\in [10an,n-10an]$.
\end{lemma}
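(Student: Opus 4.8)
The plan is a Chernoff bound plus a union bound over subsets, after two reductions. First, $e_m(S:\bar S)$ is nondecreasing along the edge process $\cG$, and for each fixed $a$ the assertion ``$e_m(S:\bar S)\ge 2an\log n$ for every $S$ with $|S|\in[10an,n-10an]$'' is a monotone increasing graph property; hence it suffices to verify it in $G_{m_-}$, after which it holds strongly for all $m\in\Ic$. Second, $e_m(S:\bar S)=e_m(\bar S:S)$, so we may always pass to whichever of $S,\bar S$ has size $\le n/2$; since $10an\le 10a_1n=10^{-2}n$ the smaller side still has size $\ge 10an$. Thus it is enough to prove: whp every $S\subseteq[n]$ with $s:=|S|\in[10a_2n,\,n/2]$ has $e_{m_-}(S:\bar S)\ge \tfrac{s\log n}{5}$ (given this, for $a\ge a_2$ and $|S|\in[10an,n-10an]$ the bound applied to the smaller side gives $e_{m_-}(S:\bar S)\ge\tfrac{10an\log n}{5}=2an\log n$).

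Fix $s$ and a set $S$ with $|S|=s$, and put $p=m_-/N$, so $np=(\log n-\log\log n)(1+o(1))$. In $G_{m_-}$ the quantity $e_{m_-}(S:\bar S)$ counts how many of the $m_-$ sampled edges fall among the $s(n-s)$ edges joining $S$ to $\bar S$; it is therefore hypergeometric with mean $\mu_s:=s(n-s)p$, and we note $\mu_s\ge\tfrac12 s\log n\,(1-o(1))$ (since $s\le n/2$), with $\mu_s=s\log n\,(1-o(1))$ when $s=o(n)$. Hypergeometric variables satisfy the binomial lower-tail bound $\Pr[X\le(1-\d)\mu]\le e^{-\d^2\mu/2}$, so taking $\d$ with $(1-\d)\mu_s=\tfrac{s\log n}{5}$ and a union bound over the $\binom ns\le\exp\!\big(s(1+\log(n/s))\big)$ sets of size $s$,
\[
\Pr\Big[\exists S,\ |S|=s:\ e_{m_-}(S:\bar S)<\tfrac{s\log n}{5}\Big]\ \le\ \exp\Big(s\big(1+\log(n/s)\big)-\tfrac{\d^2\mu_s}{2}\Big).
\]
Since $s\ge 10a_2n=30n^{21/25}$, it suffices to show the exponent is $\le-c\,s\log n$ for some fixed $c>0$ uniformly over $s\in[10a_2n,n/2]$: summing over the at most $n$ relevant values of $s$ then gives total failure probability $n\exp(-\Theta(n^{21/25}\log n))=o(1)$.

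To bound the exponent, split on $s$. If $s\ge n^{19/20}$ then $\log(n/s)\le\tfrac1{20}\log n$, and using only $\mu_s\ge\tfrac12 s\log n\,(1-o(1))$ gives $\d\ge\tfrac35-o(1)$ and $\tfrac{\d^2\mu_s}{2}\ge\tfrac9{100}s\log n\,(1-o(1))$, which beats $s(1+\tfrac1{20}\log n)=\tfrac1{20}s\log n\,(1+o(1))$ because $\tfrac9{100}>\tfrac1{20}$. If instead $10a_2n\le s<n^{19/20}$ then $s=o(n)$, so $\mu_s=s\log n\,(1-o(1))$, $\d=\tfrac45-o(1)$ and $\tfrac{\d^2\mu_s}{2}\ge\tfrac8{25}s\log n\,(1-o(1))$; meanwhile $s\ge 30n^{21/25}$ forces $\log(n/s)\le\tfrac4{25}\log n$, so the union term is $\le s(1+\tfrac4{25}\log n)=\tfrac4{25}s\log n\,(1+o(1))$, and $\tfrac8{25}>\tfrac4{25}$ makes the exponent negative. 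This completes the proof.

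The computation is routine; the one thing to get right is the balance of constants when $s$ is near its lower limit $10a_2n$, where $\log\binom ns$ is genuinely of order $\tfrac4{25}s\log n$. There one must use the exact mean $\mu_s\sim s\log n$ (not the crude $\mu_s\ge\tfrac12 s\log n$) together with the Chernoff bound at deviation $\d$ close to $1$ (not a weak tail estimate like $e^{-\mu/8}$); the exponent $4/25$ in $a_2=3n^{-4/25}$ is exactly what leaves the factor-$2$ margin $\tfrac4{25}<\tfrac8{25}$ in this worst case.
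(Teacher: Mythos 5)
Your proof is correct and follows essentially the same route as the paper: reduce to $m=m_-$ by monotonicity, then apply a lower-tail Chernoff bound to each cut together with a union bound over vertex subsets, with the exponent $4/25$ in $a_2$ providing exactly the needed margin at the smallest admissible sets. The only differences are cosmetic: you bound the hypergeometric cut count in $G_{n,m_-}$ directly rather than working in $G_{n,p}$ and transferring via the monotone-property inequality, and your passage to the smaller side plus the split at $s=n^{19/20}$ simply makes explicit the constant bookkeeping that the paper compresses into one display.
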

\begin{Remark}\label{rem2}
If follows from Lemma \ref{d} that if $a_2\leq a\leq a_1$ then w.h.p. $\G_{2,m}(X_m\setminus A)$ contains a component of size $n-10an$.  
\end{Remark}

\subsection{Case 2: $a_2=3n^{-4/25}\leq a< a_1=10^{-3}$}
\begin{lemma}\label{e}
 The following hold strongly in $\cG$: if $a_2\leq a\leq a_1$ and $|S|\leq 12an$ then $b_m(S)< \frac{\log n}{12}$.
\end{lemma}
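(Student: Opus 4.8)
The proof is a first moment (union bound) computation and uses no structural input about $\cG$ beyond the hypothesis. Two preliminary reductions. First, the ``bad'' event ``$b_m(S)\ge\tfrac{\log n}{12}$ for some admissible $S$'' can only become more likely as edges are added to the process, so it is monotone in $m$; hence it suffices to establish the bound at the single value $m=m_+=\tfrac12n(\log n+\log\log n)$, the right endpoint of $\Ic$, after which the conclusion for all $m\in\Ic$ follows. Second, for a fixed $S$ the hypothesis ``there is $a\in[a_2,a_1]$ with $|S|\le 12an$'' is equivalent to $|S|\le 12a_1n=0.012\,n$ (take $a=a_1$); the bound $a\ge a_2$ is not used here. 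So it suffices to prove: w.h.p.\ in $G_{n,m_+}$, every $S\subseteq[n]$ with $|S|=k\le 0.012\,n$ satisfies $e_{m_+}(S)<\tfrac{\log n}{12}k$.

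If $k\le\tfrac{\log n}{6}$ then $e_{m_+}(S)\le\binom k2<\tfrac{\log n}{12}k$ automatically, so assume $\tfrac{\log n}{6}<k\le 0.012\,n$ and set $t=t_k:=\lceil\tfrac{\log n}{12}k\rceil\le\binom k2$. In $G_{n,m_+}$ the count $e_{m_+}(S)$ is hypergeometric ($m_+$ edges drawn without replacement from the $N=\binom n2$ pairs, of which $\binom k2$ lie inside $S$), so
\[
\Pr\big[e_{m_+}(S)\ge t\big]\ \le\ \binom{\binom k2}{t}\frac{\binom{N-t}{m_+-t}}{\binom{N}{m_+}}\ \le\ \binom{\binom k2}{t}\Big(\frac{m_+}{N}\Big)^{t}.
\]
Using $\binom{\binom k2}{t}\le\big(e\binom k2/t\big)^t$, the estimate $\binom k2/t\le 6k/\log n$, and $m_+/N=(1+o(1))\log n/n$, this yields $\Pr[e_{m_+}(S)\ge t]\le(7ek/n)^{t}$ for $n$ large.

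Now union over the $\binom nk$ choices of $S$ and sum over $k$. With $\binom nk\le(en/k)^k$ and $x:=k/n\le 0.012$, the total weight of a given $k$ is at most
\[
\Big(\frac{en}{k}\Big)^{k}\Big(\frac{7ek}{n}\Big)^{t_k}\ \le\ \Big[\tfrac ex\,(7ex)^{\frac{\log n}{12}}\Big]^{k}.
\]
Since $x\le 0.012$ we have $7ex\le\tfrac14<1$; taking logarithms, the bracketed quantity is increasing in $x$ on $\big(\tfrac{\log n}{6n},0.012\big]$, hence is largest at $x=0.012$, where it equals $O(n^{-c})$ for an absolute constant $c>0$, and for small $k$ it is super-polynomially small because of the factor $(7ex)^{\log n/12}$. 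Summing the resulting geometric series over $\tfrac{\log n}{6}<k\le 0.012\,n$ gives a bound that is $n^{-\omega(1)}$, proving the lemma.

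The only point requiring a moment's care is the range check on $\tfrac ex(7ex)^{\log n/12}$. At the large end $k=\Theta(n)$ the entropy factor $\binom nk=e^{\Theta(n)}$ must be overcome by the polynomial-in-$n$ saving from the probability, which is exactly why one needs $7ex$ bounded away from $1$ — i.e.\ why the cap $|S|\le 12a_1n$ with $a_1=10^{-3}$ and the constant $\tfrac1{12}$ appear in the statement — while at the small end the super-polynomially small probability dominates trivially. There is no genuine obstacle beyond this bookkeeping.
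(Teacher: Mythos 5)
Your proof is correct and takes essentially the same approach as the paper: reduce to the single value $m=m_+$ by monotonicity of the bad event, then apply a first-moment union bound over sets $S$ of each admissible size, with the constant $7e\cdot 0.012<1$ playing the same role as the paper's $73ea_1<1$. The only cosmetic differences are that you bound the hypergeometric probability directly in $G_{n,m_+}$ via $\binom{N-t}{m_+-t}/\binom{N}{m_+}\le (m_+/N)^t$, whereas the paper computes in $G_{n,p}$ (its bound \eqref{Fu}) and transfers to $G_{n,m}$ by the monotone transfer inequality \eqref{transferx}, and that you make explicit the trivial range $|S|\le \log n/6$ which the paper leaves implicit.
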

It follows from Lemma \ref{b}(a)(b) and Remark \ref{rem2} that if $a\geq a_2$ then with $k$ as in \eqref{C1}, 
\beq{f1}{
\k(A)\leq k+\frac{n-k}{s_1}+n^{1/2}+10an+1\leq\brac{n-(n-k)\brac{1-\frac{10\log\log n}{\log n}}}+11an.
}
{\bf Explanation:} In $\G_{1,m}$ there are $k$ isolated vertices plus at most $(n-k)/s_1$ bridgeless non-cycle components/large cycles plus at most $n^{1/2}$ small cycles. In $\G_{2,m}$ there is one giant component plus at most $10an$ vertices on small components.

Equation \eqref{f1} implies that if $\k(A)\geq n$ then $n-k\leq 12an$. Lemma \ref{e} gives us a contradiction in that w.h.p. $12an$ vertices do not induce $an\log n$ edges. 
\eoc{2}
\begin{lemma}\label{g}
The following holds strongly in $\cG_1$ and $\cG_2$ where we consider the two processes defined by $e_i,f_i,i=1,2,\ldots,N$:
\begin{enumerate}[(a)]
\item $\G_{2,m}$ contains at most $\log^{12}n$ vertices of degree at most 10.
\item The vertices of degree at most 10 in $\G_{2,m}$ are at distance at least 3 from each other.
\item If $f_i,i\in I,\,|I|\leq 10\log^{12}n$ are incident with vertices of degree at most 10 in $\G_{2,m}$ then $\set{e_i,i\in I}$ is not contained in any set of at most $s_1|I|$ vertices that induce a 2-edge-connected subgraph of $\G_{1,m}$.
\end{enumerate}
\end{lemma}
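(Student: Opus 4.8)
The plan is to prove (a), (b), (c) in order, using (a) and (b) inside the proof of (c). Write $L(m)$ for the set of vertices of degree at most $10$ in $\G_{2,m}$, let $m_-<m_+$ be the endpoints of $\Ic$, and $p_\pm=m_\pm/N$. Part (a) is monotone along the process (adding an edge can only remove vertices from $L(m)$), so it suffices to check $|L(m_-)|\le\log^{12}n$: since $np_-=\log n-\log\log n+O(1)$, a fixed vertex lies in $L(m_-)$ with probability $O((\log n)^{10})e^{-np_-}=O((\log n)^{11}/n)$, whence $\E|L(m_-)|=O((\log n)^{11})$ and Markov's inequality finishes it. For (b) I would use a two-sided monotonicity trick: if $\deg_{\G_{2,m}}(u)\le 10$ for some $m\in\Ic$ then $u\in L(m_-)$ (degrees only grow), while if $\dist_{\G_{2,m_+}}(u,v)\ge3$ then $\dist_{\G_{2,m}}(u,v)\ge3$ for all $m\le m_+$ (distances only shrink); hence it is enough to show w.h.p.\ that $L(m_-)$ is a distance-$\ge 3$ code in $\G_{2,m_+}$. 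That is a one-shot first-moment estimate: for $u\ne v$, $\Pr(\{u,v\}\subseteq L(m_-),\ \dist_{\G_{2,m_+}}(u,v)\le 2)\le q^{2}(p_++np_+^{2})(1+o(1))=(\mathrm{poly}\log n)/n^{3}$ with $q=\Pr(\deg\le10)=O((\log n)^{11}/n)$, conditioning on the at most two connecting edges changing $q$ by at most a constant factor; summing over the $\binom n2$ pairs gives $o(1)$.

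For (c), first fix $\cG_2$ and apply (a) and (b) at time $m_-$: w.h.p.\ $|L(m_-)|\le\log^{12}n$, and since $\Delta(\G_{2,m_+})=O(\log n)$ w.h.p., the index set $\tilde J:=\{i\le m_+:f_i\cap L(m_-)\ne\es\}$ has $|\tilde J|\le j_0:=\log^{14}n$. Exactly as in (b), for every $m\in\Ic$ the index sets $I$ allowed by the hypothesis of (c) satisfy $I\subseteq\tilde J$ (since $L(m)\subseteq L(m_-)$ and $i\le m\le m_+$), and any $2$-edge-connected subgraph of $\G_{1,m}$ is a $2$-edge-connected subgraph of $\G_{1,m_+}$; so it is enough to bound, for $\G_{1,m_+}$ alone, the probability that some $I\subseteq\tilde J$ has $\{e_i:i\in I\}$ contained in a $2$-edge-connected subgraph $H$ of $\G_{1,m_+}$ with $|V(H)|\le s_1|I|$. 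Since $\cG_1\perp\cG_2$ and $I$ depends only on $\cG_2$, conditionally on $\cG_2$ and on $\G_{1,m_+}$ the set $\{e_i:i\in I\}$ is a uniformly random $k$-subset of $E(\G_{1,m_+})$, $k=|I|$, which supplies the crucial factor $1/\binom{m_+}{k}$. I would then union-bound over $I$ (at most $\binom{j_0}{k}$ choices), over $w\le s_1k$, and over $2$-edge-connected graphs $H$ on a $w$-subset of $[n]$, counting the last as a spanning tree together with at least one further edge: because $2$-edge-connectivity forces $e(H)\ge w$ one gets $\sum_{|V(H)|=w}p_+^{e(H)}\le(e\log n)^{w}$ with no stray factor $1/p_+$, while Lemma~\ref{a}(b) — applicable because $s_1j_0\ll n_0$ — caps $e(H)\le2w$, so that deciding which $k$ of the $e(H)$ edges equal $\{e_i:i\in I\}$ costs only $\binom{2w}{k}$. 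Multiplying these, a short computation gives for each fixed $I$ a contribution of order $n^{-(9/10-o(1))k}$ (all polylogarithmic and constant factors absorbed); here the value $s_1=\tfrac{\log n}{10\log\log n}$ is used precisely, since it makes $(\log n)^{s_1}=n^{1/10}$, so the dangerous factor $(e\log n)^{s_1k}$ is only $n^{(1/10+o(1))k}$. Multiplying by the $\le\binom{j_0}{k}\le(\log n)^{O(k)}$ choices of $I$ and summing over $k\ge1$ still gives $o(1)$, and the failure of Lemma~\ref{a}(b) adds a further $o(1)$; hence w.h.p.\ there is no bad configuration at time $m_+$, and by the monotonicity above the conclusion holds strongly.

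I expect the first-moment estimate in (c) to be the crux: the number of $2$-edge-connected subgraphs of $\G_{1,m}$ on at most $s_1k$ vertices containing $k$ prescribed edges must be controlled tightly, and the three ingredients that make this possible — the independence of $\cG_1$ and $\cG_2$ (giving the $1/\binom{m}{k}$ factor), the a priori density bound $b_m\le2$ of Lemma~\ref{a}(b), and the calibrated value of $s_1$ that makes $(\log n)^{s_1}$ only a small power of $n$ — have to be combined so that the polynomial-in-$n$ slack absorbs the polylogarithmic over-counting. A secondary point requiring care is reducing the ``strongly'' quantifier over $m\in\Ic$ to the two endpoints $m_-,m_+$, via the monotonicity observations above.
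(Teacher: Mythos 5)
Your argument is correct, and for parts (a) and (b) it is essentially the paper's proof: a first-moment bound at $m_-$ plus the monotonicity of degrees/distances to cover all $m\in\Ic$ (the paper organizes (b) slightly differently, estimating short paths between low-degree vertices at $p_-$ and then treating the $m_+-m_-$ added edges in three separate cases, whereas you fold everything into one estimate at the two endpoints; both are fine, though in your version you should say a word about carrying the joint event ``low degree at $m_-$, distance $\le 2$ at $m_+$'' through the $G_{n,m}$/process model, since it is not an event about a single $G_{n,m}$ and the monotone transfer inequality does not apply verbatim -- the conditioning-on-the-connecting-edges device you sketch does handle this, exactly as the paper's $\binom{n-3}{i}$ does). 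For (c) your bookkeeping is genuinely different from the paper's, though the method (first moment calibrated so that $(\log n)^{s_1}=n^{1/10}$) is the same. The paper fixes the structure in $\G_{1,m_+}$ (a cycle, or a bridgeless non-cycle set with at least $|S|+1$ edges), chooses which $t$ of its edges are ``special'', and charges each special edge a factor $\frac{10\log^{12}n}{n}$ for its partner $f_i$ meeting the $\le\log^{12}n$ low-degree vertices; it never needs an upper bound on the number of induced edges. You instead union over the candidate index sets $I\subseteq\tilde J$, exploit that, conditionally on $E(\G_{1,m_+})$ and on $\cG_2$, the set $\set{e_i:i\in I}$ is a uniform $k$-subset of the $m_+$ edges (the factor $\binom{e(H)}{k}/\binom{m_+}{k}$), count $2$-edge-connected $H$ as a spanning tree plus at least one extra edge to avoid the stray $1/p_+$, and invoke Lemma \ref{a}(b) to cap $e(H)\le 2|V(H)|$. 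Your extra ingredients (the max-degree bound $\Delta(\G_{2,m_+})=O(\log n)$, giving $|\tilde J|\le\log^{14}n$ rather than the sharper $10\log^{12}n$ available from the degree-$\le 10$ condition itself, and Lemma \ref{a}(b)) are not needed in the paper's version but cost only polylogarithmic factors, which your $n^{-(9/10-o(1))k}$ bound absorbs; conversely, your uniform-$k$-subset observation makes the independence of the two permutations explicit, which the paper leaves implicit in its $\brac{\frac{10\log^{12}n}{n}}^t$ factor. The only detail to tidy is the replacement of $\Pr(H\subseteq\G_{1,m_+})$ by $p_+^{e(H)}$, which holds up to a harmless $(1+o(1))^{e(H)}$ since $e(H)=n^{o(1)}$.
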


\subsection{Case 3: $0<a< a_2$}
\begin{lemma}\label{f}
The following holds strongly in $\cG$: if $|S|\leq n^{9/10}$ then $b_m(S)\leq 1+o(1)$.
\end{lemma}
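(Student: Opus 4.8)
The plan is a first-moment (union-bound) estimate combined with the monotonicity of $e_m(S)$ in $m$. Since running the process $\cG$ only adds edges, $e_m(S)\le e_{m_+}(S)$ for every vertex set $S$ and every $m\le m_+$; hence it suffices to show that w.h.p. the single graph $G_{m_+}\sim G_{n,m_+}$ satisfies $e_{m_+}(S)<(1+\eta)|S|$ for all $S$ with $|S|\le n^{9/10}$, where $\eta=\eta(n):=\frac{C\log\log n}{\log n}$ for a suitably large absolute constant $C$. As $\eta=o(1)$, this simultaneously gives $b_m(S)\le 1+o(1)$ for all $m\in\Ic$, which is the ``strongly'' assertion. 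Sets with $|S|\le 3$ are automatic ($b_m(S)\le 1$ always), so we may assume $4\le|S|\le n^{9/10}$.

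Write $s=|S|$, $t=t(s)=\lceil(1+\eta)s\rceil$, and $p=m_+/N$, so $np=(1+o(1))\log n$ and $sp\le 2n^{-1/10}\log n$. For a fixed $S$, a union bound over which $t$ of the $\binom s2$ potential edges inside $S$ are present, using $\Pr[\text{a fixed }t\text{-set of edges is present}]=\prod_{i=0}^{t-1}\frac{m_+-i}{N-i}\le(m_+/N)^t=p^t$, gives $\Pr[e_{m_+}(S)\ge t]\le\binom{\binom s2}{t}p^t$. Summing over the at most $\binom ns\le(en/s)^s$ sets of size $s$, and using $\binom{\binom s2}{t}\le(es^2/2t)^t\le(es/2)^t$ (valid since $t>s$) together with $esp/2\le en^{-1/10}\log n<1$ for $n$ large (so that raising $esp/2$ to the larger power $t\ge(1+\eta)s$ only helps), I obtain
\[
\Pr\big[\exists\,S,\ |S|=s,\ e_{m_+}(S)\ge t\big]\le\left(\frac{en}{s}\right)^{s}\left(\frac{esp}{2}\right)^{(1+\eta)s}=\left[\frac{e^{2}np}{2}\left(\frac{esp}{2}\right)^{\eta}\right]^{s}.
\]

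To finish, note $\frac{e^2np}{2}\le e^2\log n$ for $n$ large, while, since $esp/2<1$ with the worst case at $s=n^{9/10}$, $(esp/2)^\eta\le(en^{-1/10}\log n)^\eta=\exp\!\big(\eta\log(en^{-1/10}\log n)\big)=(1+o(1))(\log n)^{-C/10}$. Hence the bracket is at most $(1+o(1))e^2(\log n)^{1-C/10}$, which tends to $0$ once $C>10$; taking $C$ large enough makes it at most $(\log n)^{-1}$ for all large $n$, so the failure probability at size $s$ is at most $(\log n)^{-s}$ and $\sum_{s\ge4}(\log n)^{-s}=O((\log n)^{-4})=o(1)$. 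This proves Lemma \ref{f}.

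I do not expect a genuine obstacle here: this is a standard sparse-random-graph computation. The only point that needs attention is the balance between the two logarithmic scales — the density factor $np\approx\log n$, which is $\omega(1)$, has to be absorbed by $(sp)^\eta=n^{-\Theta(\eta)}$, which is polynomially small in $n$ for any $\eta$ bounded below by a constant multiple of $\log\log n/\log n$. This is exactly why one can still take $\eta=o(1)$, and it is where the hypothesis $|S|\le n^{9/10}$ enters essentially (it forces $sp\to0$, hence $esp/2<1$).
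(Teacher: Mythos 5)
Your proof is correct and follows essentially the same first-moment union bound as the paper: the same reduction to $m=m_+$ by monotonicity, the same count over sets $S$ and edge subsets, and the same mechanism whereby $\eta=\Theta(\log\log n/\log n)$ absorbs the density factor $np\approx\log n$ through $(sp)^\eta=(\log n)^{-\Theta(C)}$. The only cosmetic difference is that you bound the probability of a fixed $t$-set of edges directly in $G_{n,m_+}$ by $p^t$, whereas the paper does the computation in $G_{n,p}$ and transfers to $G_{n,m}$ via its monotone transfer inequality \eqref{transferx}; both routes are valid.
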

Let $S^*=\bigcup_{j=k+1}^\ell C_j$ and $s^*=|S^*|$ and observe that $m^*:=|A|=e_{1,m}(S^*)$. We are left to consider the situation where we delete $m^*\leq m_1=a_2n\log n=3n^{21/25}\log n$ edges from $\G_{2,m}(X_m)$. Suppose that $\r(m^*)$ is the maximum number of components obtainable by deleting $m^*$ edges from $\G_{2,m}$. Since we can assume there are no bridges in $\G_{1,m}(A)$, any nontrivial component in $\G_{1,m}(A)$ has size at least 3 and we have that
\beq{kA}{
\k(A)\leq k+\frac{n-k}{3}+\r(m^*).
}
And
\beq{kB}{
m^*\approx n-k.
}
This is because w.h.p. the number of edges in a bridgeless component of size $3\leq s\leq n^{9/10}$ lies in $[s,(1+o(1))s]$ edges. The lower bound is true for all such sets and the upper bound follows from Lemma \ref{f}. 

Suppose that after removing $m^*$ edges from $\G_{2,m}$ we have components $K_1,K_2,\ldots,K_\r,\r=\r(m^*)$ where $|K_1|\leq |K_2|\leq \cdots\leq |K_\r|$. We then know from Remark \ref{rem2} that $|K_\r|\geq n-10a_2n$. We have from Lemma \ref{f} that $e_{2,m}(K_i)\approx |K_i|$ for $1\leq i\leq \r-1$. Let $\deg_2(v)$ denote the degree of vertex $v$ in $\G_{2,m}$ and $\deg_2(X)=\sum_{x\in X}\deg_2(x)$. For a fixed $i\in[\r-1]$, the set $f_j,j\in A$ must contain  $\deg_2(K_i)-2e_{2,m}(K_i)\geq \deg_2(K_i)-(2+o(1))|K_i|$ edges with exactly one end in $K_i$. Thus, if there are $\r_1$ single vertex components then
\[
m^*\geq \frac 1 2 \sum_{i=1}^{\r-1}(\deg_2(K_i)-(2+o(1))|K_i|)=\frac 1 2 \sum_{i=1}^{\r_1}\deg_2(K_i)+\frac 1 2\sum_{i=\r_1+1}^{\r-1}(\deg_2(K_i)-(2+o(1))|K_i|).
\]
Define $\rho_0\leq \rho_1$ such that, among the single-vertex components, we have $\deg_2(K_i)\leq 10$ for $1\leq i\leq \r_0$ and $\deg_2(K_1)\geq 11$ for $\r_0+1\leq i\leq \r_1$. It follows from Lemma \ref{g}(b) that at least half of the vertices in any non-trivial $K_i,i>\r_1$ have degree at least 11. This is because the neighborhoods of the low degree vertices are disjoint and non-empty. So, $\deg_2(K_i)\geq 8|K_i|/2$ for all $i>\r_0$. Thus,
\[
m^*\geq \frac12\brac{\r_0+\frac{8(\r-1-\r_0)}{2}}=\frac{8(\r-1)}4-\frac{8\r_0}4.
\]
The initial factor of $\tfrac12$ arises because the same edge might be counted twice, once for each of the $K_i$ that it is incident with. 

It follows from Lemma \ref{g}(c) and \eqref{C1} that $\r_0\leq m^*/s_1$. (The edges deleted from $\G_{2,m}$ correspond to edges of bridgeless components in $\G_{1,m}$. And then Lemma \ref{g}(c)  implies that each edge that was deleted in $\G_{2,m}$ to create $K_i,i\leq \r_0$ can be ``charged'' to $s_1$ distinct edges of $\G_{1,m}$.) So, $\r-1\leq (4+o(1))m^*/8$. In which case \eqref{kA} and \eqref{kB} imply that
\[
\k(A)\leq k+(n-k)\brac{\frac{1}{s_1}+\frac{1}{3}+\frac{4+o(1)}{8}}+1\leq n+1.
\]
\eoc{3}

When $a=0$ we rely on the connectivity of both $\G_{1,m},\G_{2,m}$.
\subsection{Hitting time}\label{hit}
The essence of the above argument is that if $\G_{1,m},\G_{2,m}$ are both connected and satisfy the conditions of Lemmas \ref{a} -- \ref{f} then there is a \MT. It is well known that the hitting time $m_c$ for connectivity and minimum degree at least one satisfies $m_c\in[m_-,m_+]$ w.h.p. Thus to verify the claim for a hitting time, we only have to show that Lemmas \ref{a} -- \ref{f} are valid for $G_{n,m},m\in [m_-,m_+]$. The reader will observe that we have been careful to do precisely this.
\section{Proof of Theorem \ref{th3}}
We first consider a multi-partite version where the edges $e_{i,s}$ are drawn from disjoint copies of the edges of the complete bipartite graph $K_{n,n}$. In this case, \MP's are in 1-1 correspondence with perfect matchings in the complete $2s$-uniform multi-partite hypergraph with edges set $[n]^{2s}$. As such it is known that a random set of $Kn\log n$ edges is sufficient for a perfect matching w.h.p. It is tempting to take $K=1$ and refer to Kahn \cite{K0}, \cite{K1}. On the other hand, one can legitimately cite \cite{JKV} or \cite{BF} and get some constant $K$.

With the above case in hand, one gets Theorem \ref{th3} by partitioning $[n]$ randomly into $2s$ parts of $V_1,V_2,\ldots,V_{2s}$ of size $\rdown{n/2s}$ and discarding at most $2s-1$ vertices. Then we only consider those $\be_i=(x_1,x_2,\ldots,x_{2s})$ and appeal to the above multi-partite version.

If we want to assume that $n$ is even and consider $s$ perfect matchings in $K_n$ then we can partition $[n]$ into two sets $A,B$ of size $n/2$ and only consider those $\be_i$ where all the $e_{i,j}$ have one end in $A$ and the other in $B$. We then have to inflate the $K$ of the first paragraph by at most $2^s$. This idea can be extended to deal with tree factors as in {\L}uczak and Ruci\'nski \cite{LR}.
\section{Final Remarks}
We have proved some threshold results for the intersections of cycle matroids. It would be of interest to extend this to other classes of matroid, e.g. binary matroids. There is also the analogous problem with respect to Hamilton cycles. This seems to be more difficult.

\appendix
\section{Proof of Structural Lemmas}
\paragraph{Universal Parameters}
\[
\om=\log^{2/5}n; \quad \e=\frac{1}{\om}; \quad \th_0=\frac{5\log\log n}{\log n}; \quad \s_0=\frac{10\om^2\log\log n}{\log n}; \quad a_{max}=\frac{m}{2n\log n}\approx \frac{1}4.
\]
\paragraph{Lemma \ref{a}}
{\it 
The following hold strongly in $\cG$: in the statements, $p=m/N$ and $m\in \Ic$.
\begin{enumerate}[(a)]
\item If $|S|\geq \s_0n$ then $e_m(S)\in (1\pm\e)\binom{s}{2}p$.
\item If $|S|\leq n_0=\frac{n}{\log^3n}$ then $b_m(S)\leq 2$.
\item Let $i_0=\max\set{i:|C_i|\leq n_0}$. Then $|\ell-i_0|\leq \log^3n$.
\item $e_{1,m}(C_1)+\cdots+e_{1,m}(C_{i_0})\leq 2(|C_1|+\cdots+|C_{i_0}|)\leq 2n$. 
\end{enumerate}
}
\begin{proof}
(a) It follows from the Chernoff bounds that in $G_{n,p}$
\mult{1}{
\Pr\brac{\exists S,|S|=s\geq \s_0n: e_p(S)\notin (1\pm\e)\binom{s}{2}p}\leq 2\sum_{s=\s_0n}^n\binom{n}{s} \exp\set{-\frac{\e^2s(s-1)p}{6}}\leq\\
 2\sum_{s=\s_0n}^n\brac{\frac{ne}{s}\cdot \exp\set{-\frac{\e^2s\log n}{7n}}}^s=2 \sum_{s=\s_0n}^n\brac{\frac{ne}{s}\cdot \exp\set{-\frac{s\log n}{7\om^2n}}}^s=o(n^{-3}).
}
Now for any graph property $\cP$ we have 
\beq{transfer}{
\Pr(G_{n,m}\in\cP)\leq 10m^{1/2}\Pr(G_{n,p}\in\cP).}
There are many possible references for this result, see for example Lemma 1.2 of \cite{FK}. We will generally use \cite{FK} for references.

The claim for all $m\in I_{conn}$ then follows directly from \eqref{1} and \eqref{transfer}. The probability there exists an $S$ being $O(n^{-3}\times (n\log n)^{1/2}\times n\log\log n)$.

(b)
\mult{Fu}{
\Pr\brac{\exists S,|S|=s: b_p(S)\geq b}\leq \binom{n}{s} \binom{\binom{s}{2}}{b s}p^{b s}\leq \brac{\frac{ne}{s}\cdot \bfrac{s^2e^{1+o(1)}\log n}{2b sn}^{b}}^s\\
 =\brac{\frac{ne}{s}\bfrac{se^{1+o(1)}\log n}{2bn}^{b}}^s =\brac{\bfrac{se^{1+o(1)}\log n}{2bn}^{b-1}\cdot \frac{e^{2+o(1)}\log n}{2b}}^s.
}
It follows that 
\beq{aa}{
\Pr\brac{\exists S,|S|=s\leq n_0: b_p(S)\geq 2}\leq \sum_{s=5}^{n_0}\bfrac{e^{3+o(1)}}{8\log n}^s=o(1).
}
Now the event $\set{\exists S,|S|=s\leq n_0: b_p(S)\geq 2}$ is monotone increasing. For monotone increasing events $\cP$, \eqref{transfer} can be strengthened to 
\beq{transferx}{
\Pr(G_{n,m}\in\cP)\leq 3\Pr(G_{n,p}\in\cP).}
See for example Lemma 1.3 of \cite{FK}. Note also that we need only to prove this for $m=m_+$. In which case, \eqref{aa} also implies (b).

(c) This is obvious.

(d) This follows from (b).
\end{proof}
\paragraph{Lemma \ref{aaa}}
{\it 
The following hold strongly in $\cG$: in the statements, $p=m/N$ and $m\in \Ic$ and $a_1=10^{-3}\leq a\leq a_{\max}\approx \tfrac14$.
\begin{enumerate}[(a)]
\item If $|A|=an\log n$ then $v_m(A)\geq  \s(a)n$.
\item $|C_{k+1}|+\cdots+|C_\ell|\geq \bfrac{2a(1-\th_0)}{1+\e}^{1/2}n$. 
\item $i_0\leq \brac{1-\frac{(2a-3\th_0)^{1/2}}{1+\e}}n$.
\end{enumerate}
}
\begin{proof}
(a)  Since $a\geq a_1$ we have that $\s(a)\geq \s(a_1)$. We claim next that w.h.p. $V_m(A)$ is not the union of components. If $s=v_m(A)<\s(a)n$ then in $G_{n,p}$ we can bound this probability by
\mults{
\binom{n}{s}\binom{\binom{s}{2}}{an\log n}p^{an\log n}(1-p)^{s(n-s)} = \binom{n}{s}\binom{\binom{s}{2}}{(1+\e)\binom{\s(a)n}{2}p}p^{an\log n}(1-p)^{s(n-s)} \leq\\
 \bfrac{ne}{s}^s\bfrac{s^2}{(1+\e)(\s(a)n)^2}^{an\log n}e^{-snp(1-\s(a))}\leq \bfrac{en^{\s(a)}}{s}^s(1+\e)^{-a_1n\log n}\leq e^{-n}.
}
We can add vertices to create $B\supseteq V_m(A)$ with $|B|=\s(a)n$. Because $V_m(a)$ is not the union of components, we can assume that $e_m(B)>|A|$. We then see that w.h.p. $|A|<e_m(B)\leq an\log n$, contradiction. (The second inequality follows from Lemma \ref{a}(a) and the definition of $\s$.)

(b) This follows from \eqref{s(a)} and (a). We remind the reader that $|A|=an\log n$ and $a\geq a_1$ in this case and that $|C_{k+1}|+\cdots+|C_\ell|$ is the number of vertices in the subgraph of $\G_{1,m}$ induced by $A$.

(c) It follows from Lemma \ref{a}(a) that
\[
|C_{i_0+1}|+\cdots+|C_\ell|\geq \s\bfrac{an\log n-2i_0}{n\log n}\geq \s\brac{a-\frac{2}{\log n}} \geq \bfrac{2a-3\th_0}{1+\e}^{1/2}n.
\]
So,
\beq{k}{
i_0\leq |C_1|+\cdots+|C_{i_0}|\leq \brac{1-\frac{(2a-3\th_0)^{1/2}}{1+\e}}n.
}
\end{proof}
\paragraph{Lemma \ref{b}}
{\it
The following hold strongly in $\cG$: in the statements, $p=m/N$.
\begin{enumerate}[(a)]
\item If $S$ induces a bridgeless subgraph which is not an induced cycle, then $|S|\geq s_1=\frac{\log n}{10\log\log n}$.
\item There are at most $n^{1/2}$ cycles of length at most $s_1$.
\end{enumerate}
}
\begin{proof}
(a) A bridgeless graph is either a cycle or has $s$ vertices and at least $s+1$ edges. But then, in $G_{n,p}$,
\mult{sigma1}{
\Pr(\exists S:|S|\leq 2s_1,e_p(S)\geq |S|+1)\leq \sum_{s=4}^{2s_1}\binom{n}{s}\binom{\binom{s}{2}}{s+1}\bfrac{e^{o(1)}\log n}{n}^{s+1} \leq\\ 
\sum_{s=4}^{2s_1}\bfrac{ne}{s}^s\bfrac{se}{2}^{s+1}\bfrac{e^{o(1)}\log n}{n}^{s+1} =\sum_{s=4}^{2s_1}\frac{2\log n}{n}\bfrac{e^{2+o(1)}\log n}{2}^s=o(1).
}
Having such a set $S$ is a monotone increasing property and so we obtain the needed result from \eqref{transferx}. Again, we only need verify the property from $m=m_+$. (We use $2s_1$ in place of $s_1$ for use in (b).)

(b) If two small cycles share a vertex then there is a set $S$ of size at most $2s_1$ that contains at least $|S|+1$ edges. This was ruled out in the analysis of (b). So, we can count {\em selfish} small cycles, i.e. those that do not share vertices with other small cycles. Let $\n_0=n^{1/2}$ and let $s$ be a positive integer. Then 
\beq{choose}{
\Pr(Z\geq \n_0)=\Pr\brac{\binom{Z}{s}\geq \binom{\n_0}{s}} \leq \frac{\E\brac{\binom{Z}{s}}}{\binom{\n_0}{s}}
}
Now
\begin{align*}
\E\brac{\binom{Z}{s}}&=\sum_{3\leq \ell_1,\ell_2,\ldots,\ell_s\leq s_1} \binom{n}{\ell_1,\ell_2,\ldots,\ell_s,n-\ell_1-\cdots-\ell_s} \prod_{i=1}^s\frac{(\ell_i-1)!}{2}p^{\ell_i}\\
&\leq \sum_{3\leq \ell_1,\ell_2,\ldots,\ell_s\leq s_1}\prod_{i=1}^s\frac{e^{o(1)}\log n}{2\ell_i}
\leq \brac{\sum_{3\leq \ell\leq s_1}\frac{e^{o(1)}\log n}{2\ell}}^s\\
&\leq n^{s/9}.
\end{align*}
Going back to \eqref{choose} we see that
\[
\Pr(Z\geq \n_0)\leq \frac{n^{s/9}s^s}{\n_0^s}=o(n^{-3})
\]
if we take $s=30$. Thus the probability the claim fails for any $m\in I_{conn}$ is at most $O(m\log\log n\times m^{1/2}\times n^{-3})=o(1)$.
\end{proof}
\paragraph{Lemma \ref{d}}
{\it
 The following hold strongly in $\cG$: if $3n^{-4/25}=a_2\leq a\leq a_1=10^{-3}$ then $e_m(S:\bar{S})\geq 2an\log n$ for all $S,|S|\in [10an,n-10an]$.
}
\begin{proof}
We only have to prove this for $m=m_-$. First observe that if $10an\leq s\leq n/2$ then 
\[
\frac{s(n-s)p}{an\log n}=\frac{e^{o(1)}s(n-s)}{an^2}\geq \frac{e^{o(1)}s}{2an}\geq 5-o(1).
\]
It follows from Chernoff bounds, that in $G_{n,p}$ we have
\mults{
\Pr(\exists S,|S|\in [10an,n-10an]:e_p(S,\bar{S})\leq 2an\log n)\leq 2\sum_{s=10an}^{n/2}\binom{n}{s} e^{-s((3-o(1))/5)^2\log n/2}\leq\\ 2\sum_{s=10an}^{n/2}\bfrac{ne^{1-4\log n/25}}{s}^{s}\leq 2\sum_{s=10an}^{n/2}\bfrac{e}{3}^s=o(n^{-2}).
}
Now use \eqref{transferx}.
\end{proof}
\paragraph{Lemma \ref{e}}
{\it
 The following hold strongly in $\cG$: if $a_2\leq a\leq a_1$ and $|S|\leq 12an$ then $b_m(S)\leq \frac{\log n}{12}$.
}
\begin{proof}
We only have to prove this for $m=m_+$.  Applying \eqref{Fu}, we have that in $G_{n,p}$,
\mults{
\Pr\brac{\exists S,|S|=s\leq 12an: b_p(S)\geq \frac{\log n}{12}}\leq \sum_{s=5}^{12an}\brac{\bfrac{12e^{1+o(1)}a\log n}{2\log n/12}^{\log n/12-1}\cdot \frac{12e^{1+o(1)}\log n}{2\log n}}^s\\
\leq\sum_{s=5}^{12an}\brac{(73ea)^{\log n/12-1}\cdot 20}^s=o(n^{-2}).
}
The property in question is monotone decreasing. We can use \eqref{transferx}.
\end{proof}
\paragraph{Lemma \ref{g}}
{\it
The following holds strongly in $\cG_1$ and $\cG_2$ where we consider the two processes defined by $e_i,f_i,i=1,2,\ldots,N$:
\begin{enumerate}[(a)]
\item $G_{2,m}$ contains at most $\log^{12}n$ vertices of degree at most 10.
\item The vertices of degree at most 10 in $\G_{2,m}$ are at distance at least 3 from each other.
\item If $f_i,i\in I,\,|I|\leq 10\log^{12}n$ are incident with vertices of degree at most 10 in $\G_{2,m}$ then $\set{e_i,i\in I}$ is not contained in any set of at most $s_1|I|$ vertices that induce a bridge free connected subgraph of $\G_{1,m}$.
\end{enumerate}
}
\begin{proof}
(a) We only have to prove this for $m=m_-$. Let $S$ denote the set of vertices of degree at most 10 in $G_{m_-}$. If $p=p_-=m_-/N$ then in $G_{n,p}$ the expected size of $S$ can be bounded by
\[
n\sum_{i=0}^{10}\binom{n}{i}p^i(1-p)^{n-i}\leq 2n\sum_{i=0}^{10}\log^in\times \frac{\log n}{n}.
\]
The bound on the size of $S$ now follows from the Markov inequality. A bound on the size of $S$ is a monotone increasing property and so we can translate this error bound to $G_{n,m_-}$ via \eqref{transferx}. 

(b) Let $S$ denote the set of  vertices of degree at most 10 in $G_{n,p},p=p_-$ . The probability that there is a path of length at most two between two vertices in $S$ is at most
\[
\binom{n}{2}(p+np^2)\brac{\sum_{i=0}^{10}\binom{n-3}{i}p^i(1-p)^{n-3-i}}^2=O\bfrac{\log^{12}n}{n^{1-o(1)}}.
\]
Given (a), the probability that the $m_+-m_-$ additional edges in $G_{m_+}-G_{m_-}$ add an edge between two vertices in $S$ can be bounded by $O\brac{n\log\log n\times \log^{24}n/\binom{n}{2}}=o(1)$.

Given (a), the probability that the $m_+-m_-$ additional edges add an edge between $S$ and a neighbor of $S$ can be bounded by $O\brac{n\log\log n\times \log^{12}n/\binom{n}{2}}=o(1)$.

Given (a), the probability that the $m_+-m_-$ additional edges join a pair of vertices in $S$ to a $G_{m_-}$ non-neighbor can be bounded by $O\brac{(n\log\log n)^2\times \log^{24}n\times n/\binom{n}{2}^2}=o(1)$.

(c) Given the bound on the number of low degree vertices in (a), the probability that there exists a cycle of length $s$ in $\G_{1,m},m=m_+$ containing $t$ edges $e_i$ for which $f_i$ is incident with a vertex of degree at most 10 in $\G_{2,m_-}$ is at most
\beq{last}{
n^s\binom{s}{t}\frac{\binom{N-s}{m-s}}{\binom{N}{m}}\bfrac{10\log^{12}n}{n}^t \leq (2\log n)^s \bfrac{10\log^{12}n}{n}^t.
}
We must sum the RHS of \eqref{last} for $1\leq t\leq 10\log^{12}n$ and $3\leq s\leq s_1t$. Observing that $\log^{s_1+10}n=n^{1/10+o(1)}$, we see that this sum is $o(1)$. (Recall that $s_1=\frac{\log n}{10\log\log n}$.) If there is a cycle that contradicts (c) in the process then this cycle will occur in $G_{m_+}$ and the offending $f_i$ will be incident with low degree vertices in $G_{m_-}$.

Now consider bridge free connected sets. The probability that there is a set of size $s$ with $t$ edges $e_i$ of the required sort can be bounded by
\mults{
\binom{n}{s}\binom{\binom{s}{2}}{s+1}\frac{\binom{N-s-1}{m-s-1}}{\binom{N}{m}} \binom{s+1}{t}\bfrac{10\log^{12}n}{n}^t\leq \frac{se}{n}\brac{\frac{ne}{s}\cdot\frac{se}{2}\cdot\frac{m}{N}\cdot2}^{s+1} \bfrac{10\log^{12}n}{n}^t\leq \\
(e^{2+o(1)}\log n)^s\bfrac{10\log^{12}n}{n}^t.
}
We finish the argument as we did for cycles.
\end{proof}
\paragraph{Lemma \ref{f}}
{\it The following holds strongly in $\cG$: if $|S|\leq n^{9/10}$ then $b(S)\leq 1+10\th_0$.}
\begin{proof}
We only have to prove this for $m=m_+$. Applying \eqref{Fu}, we have that in $G_{n,p}$,
\mults{
\Pr\brac{\exists S,|S|=s\leq n^{9/10}: b_p(S)\geq 1+\frac{50\log\log n}{\log n}}\leq\\ \sum_{s=4}^{n^{9/10}} \brac{\bfrac{e^{1+o(1)}\log n}{2n^{1/10}}^{50\log\log n/\log n}\cdot e^{2+o(1)}\log n}^s \leq \sum_{s=4}^{n^{9/10}} \bfrac{e^5}{\log^{3}n}^s=o(1).
}
The property in question is monotone increasing and so we can apply \eqref{transferx}.
\end{proof}

\end{document}